\documentclass[12pt,letterpaper]{article}
\usepackage{amssymb, amsfonts}
\usepackage{amsmath}
\usepackage{amscd}
\usepackage[all]{xy}
\usepackage{amsthm}
\usepackage{enumerate}   
\usepackage{xcolor}
\usepackage{tikz-cd}

\usepackage{url}           % enables \url

\setlength{\headheight}{0in}
\setlength{\textheight}{8.5in}
\setlength{\oddsidemargin}{0.55in}
\setlength{\textwidth}{5.87in}
\setlength{\headsep}{-.19in}

\pagestyle{plain}

%\newcommand{\draftspaced}{\singlespaced} %for draft only 
 %for final version

%\theoremstyle{plain}

\newtheorem{thm}{Theorem} 
\newtheorem{prop}{Proposition}
\newtheorem{lem}{Lemma}
\newtheorem{cor}{Corollary}

\theoremstyle{definition}
\newtheorem{definition}{Definition}
\newtheorem{expl}{Example}
\newtheorem{conj}{Conjecture}

\newtheorem{question}{Question}
\newtheorem{rem}{Remark}

\DeclareMathOperator{\R}{\mathbb{R}}
\DeclareMathOperator{\C}{\mathbb{C}}
\DeclareMathOperator{\Z}{\mathbb{Z}}
\DeclareMathOperator{\N}{\mathbb{N}}

\DeclareMathOperator{\Q}{\mathbb{Q}}

\DeclareMathOperator{\Gal}{\text{Gal}}
\DeclareMathOperator{\Hom}{\text{Hom}}

\DeclareMathOperator{\Ext}{\text{Ext}}

\DeclareMathOperator{\prim}{\text{prim}}
\DeclareMathOperator{\Fil}{\text{Fil}}
\DeclareMathOperator{\rank}{\text{rank}}

\DeclareMathOperator{\gr}{\text{gr}}

\DeclareMathOperator{\W}{\mathcal{W}}
\DeclareMathOperator{\fin}{\text{fin}}
\DeclareMathOperator{\cpt}{\text{cpt}}
\DeclareMathOperator{\tor}{\text{tor}}
\DeclareMathOperator{\free}{\text{free}}

\DeclareMathOperator{\Tam}{\text{Tam}}
    \DeclareFontFamily{U}{wncy}{}
    \DeclareFontShape{U}{wncy}{m}{n}{<->wncyr10}{}
    \DeclareSymbolFont{mcy}{U}{wncy}{m}{n}
    \DeclareMathSymbol{\Sha}{\mathord}{mcy}{"58}

\numberwithin{equation}{section}

%\newcommand{\ca}{\mathcal}

%\newcommand{\ol}{\overline}

%\doublespaced
\title{Heights and Tamagawa numbers of motives}
\author{Tung T. Nguyen}

\begin{document}
%% Basic setup commands
% If you don't want a title page comment out the next line and uncomment the line after it:
\maketitle
%\omittitle

% These lines can be commented out to disable the copyright/dedication/epigraph pages
%\makededication
%\makeepigraph

\begin{abstract}
K. Kato has recently defined and studied heights of mixed motives and proposed some interesting questions. In this article, we relate the study of heights to the study of Tamagawa numbers of motives. We also partially answer one of Kato's questions about the number of mixed motives of bounded heights in the case of mixed Tate motives with two graded quotients. Finally, we provide a concrete computation with the number of mixed Tate motives with three graded quotients.
\end{abstract}

\section{Introduction and main results}
K. Kato has recently defined and studied heights of mixed motives and proposed some interesting questions (see $\cite{[Kato1]}$). In this article, we relate the study of heights to the Tamagawa number conjecture for motives. More precisely, we have the following theorem. 
\begin{thm} \label{theorem1}
Let $M$ be a pure motives with integer coefficients of weight $-d$ such that $d \geq 3$. We assume further that $M$ has semistable reduction at all places. Then
\[ \lim_{B \to \infty} \dfrac{\# \{x \in B(\Q)| H_{\diamond,d}(x) \leq B \}}{\mu \left(x \in \prod'_{p \leq \infty} B(\Q_p)| H_{\diamond,d}(x) \leq B \right)} =\dfrac{1}{\Tam(M)} .\]
Here $B(\Q)$ (respectively $B(\Q_p)$) is the relevant motivic cohomology associated with $M$ (respectively the local Selmer group at $p$), $\mu$ is the Tamagawa measure associated with $M$, and $\Tam(M)$ is the Tamagawa number of $M$ as  defined in Bloch-Kato's article \cite{[BK]}. Finally, $H_{\diamond, d}(x)$ is the height function of mixed motives defined in Kato's article \cite{[Kato1]}. 
\end{thm}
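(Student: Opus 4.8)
\emph{Overall strategy.} The plan is to recast both sides of the quotient on one fixed adelic space $\mathbb{A}:=\prod'_{p\le\infty}B(\Q_p)$, so that the numerator becomes a lattice-point count and the denominator a volume, and then to show that the comparison of ``counting measure'' with ``Tamagawa measure'' on $\mathbb{A}$ is exactly the Bloch--Kato Tamagawa number. Both hypotheses enter at the start: $d\ge3$ forces $B(\Q)=H^1_f(\Q,M)$ to be a finitely generated abelian group and makes the attached Selmer complex degenerate (so that $H^0_f(\Q,M)=H^2_f(\Q,M)=0$), while the semistable-reduction hypothesis is what makes the local heights at the finitely many bad primes computable with no spurious correction terms.

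\emph{Step 1: an Euler product for Kato's height.} First I would unwind Kato's definition and show that $H_{\diamond,d}$ is an Euler product $H_{\diamond,d}(x)=\prod_{p\le\infty}H_{\diamond,d,p}(x_p)$, where $x_p$ denotes the localization of $x$ in $B(\Q_p)$. For finite $p$ this rests on Kato's description of the height through the $p$-adic realization and the Bloch--Kato exponential; the local factors are trivial at good primes, and at bad primes one checks --- using semistability --- that they recover the local Tamagawa terms of $M$ with no extraneous power of $p$. At $p=\infty$ the space $B(\Q_\infty)$ is the real Deligne cohomology $\big(D_{\dR}(M)/\Fil^0\big)_\R$ that receives the Beilinson regulator of $B(\Q)$, and $H_{\diamond,d,\infty}$ is the norm cut out by the Betti $\Q$-structure; this is the only archimedean place, and the region $R_B:=\{\xi:H_{\diamond,d,\infty}(\xi)\le B\}$ it defines is a symmetric, Lipschitz-bounded body that dilates linearly in $B$ and carries all of the volume growth.

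\emph{Step 2: counting against the Tamagawa measure.} The localization map $B(\Q)\to\mathbb{A}$ has finite kernel (governed by $H^0$) and finite cokernel (governed by $\Sha(M)$), and its image is discrete of finite covolume, realized through the archimedean coordinate as a lattice $\Lambda$ inside a real subspace of $B(\Q_\infty)$. A Davenport/Gauss-type lattice-point estimate --- equivalently a Tauberian argument for the height zeta function --- then yields
\[
\#\{x\in B(\Q):H_{\diamond,d}(x)\le B\}\;\sim\;\frac{\mathrm{vol}_{\mathrm{Leb}}(R_B)}{\mathrm{covol}(\Lambda)}\cdot\prod_{p<\infty}(\text{local index at }p),
\]
whereas the Tamagawa measure $\mu$, being by construction the restricted product --- with the usual local convergence factors --- of Haar measures on the groups $B(\Q_p)$ and of the period-normalized Lebesgue measure on $B(\Q_\infty)$, satisfies $\mu(\{x:H_{\diamond,d}(x)\le B\})\sim c_\infty(M)\,\mathrm{vol}_{\mathrm{Leb}}(R_B)\prod_{p<\infty}\mu_p(B(\Q_p))$.

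\emph{Step 3: identifying the constant, and the main obstacle.} Dividing the first asymptotic by the second and letting $B\to\infty$, the dilating Lebesgue volume cancels, and the limit is a finite product of: the period $c_\infty(M)$, the regulator $\mathrm{covol}(\Lambda)$, the local densities $\mu_p(B(\Q_p))$ weighed against the local indices, and --- coming from the kernel and cokernel of localization --- the orders of $H^0_f(\Q,M)$ and $\Sha(M)$. This is precisely the bundle of invariants that Bloch--Kato assemble into $\Tam(M)$, so that after matching normalizations the quotient is $1/\Tam(M)$. I expect Step 1 to be the crux: realizing Kato's height literally as an Euler product whose archimedean factor is the period-normalized regulator norm and whose bad-prime factors are literally the Bloch--Kato local terms --- this is exactly where the semistability hypothesis must be exploited, and where one must guard against introducing extraneous powers of the residue characteristics. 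A secondary technical point is to make the lattice-point asymptotics uniform enough (regularity of the height body, no concentration of measure, correct bookkeeping of the $H^0_f/\Sha$ corrections) that the leading term is genuinely the limit.
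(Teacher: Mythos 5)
There is a genuine gap, and it sits exactly where you predicted the crux would be, but not for the reason you give. In Steps 2--3 you let the constant in the limit be assembled from the period, the regulator covolume, the local densities, \emph{and} the orders of $H^0_f(\Q,M)$ and $\Sha(M)$ coming from the kernel and cokernel of localization, and you then declare this to be ``precisely the bundle of invariants that Bloch--Kato assemble into $\Tam(M)$.'' That conflates the \emph{definition} of the Tamagawa number with the Tamagawa number \emph{conjecture}. In Bloch--Kato (and in this paper) $\Tam(M)$ is by definition the volume $\mu\left(\prod_p A(\Q_p)/A(\Q)\right)$; the identity relating it to $\#H^0(\Q,\Theta^{*}\otimes\Q/\Z(1))$ and $\#\Sha(M)$ is the open conjecture, which Theorem 1 neither uses nor proves. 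If your limit genuinely produced $\Sha$ and $H^0$ factors, equating it with $1/\Tam(M)$ would require that conjecture, making the argument circular; in a correct computation those factors simply never appear, because one works with the image of $B(\Q)$ under the assumed isomorphisms (regulator identification of $A(\Q)_{\free}$ with a lattice in $B(\R)/B(\R)_{\cpt}$, and $B(\Q)/A(\Q)\cong\prod_{p<\infty}B(\Q_p)/A(\Q_p)$), not with kernel/cokernel corrections.

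A second, related defect is the shape of your denominator asymptotic $\mu(\{H_{\diamond,d}\le B\})\sim c_\infty\,\mathrm{vol}(R_B)\prod_{p<\infty}\mu_p(B(\Q_p))$. At a bad prime $B(\Q_p)\cong A(\Q_p)\times\Z^{s_p}$ carries counting measure on the $\Z^{s_p}$ factor, so $\mu_p(B(\Q_p))$ is infinite whenever $s_p>0$; moreover the height condition $Q_{\diamond,\infty,d}([a]_\infty)^{1/d}+\sum_p(\ln p)\,Q_{\diamond,p,d}([a]_p)^{1/d}\le\ln B$ couples the places, so neither side factors as (archimedean volume)$\times$(local constants). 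The paper's route avoids both problems: it splits $B(\Q)\cong A(\Q)_{\tor}\times A(\Q)_{\free}\times\prod_{p<\infty}\Z^{s_p}$ and $\prod_pB(\Q_p)\cong B(\R)_{\cpt}\times\R^{s_\infty}\times\prod_p\bigl(A(\Q_p)\times\Z^{s_p}\bigr)$, observes that the finite-place discrete data $\prod_p\Z^{s_p}$ is \emph{common} to numerator and denominator, factors out the compact pieces $|A(\Q)_{\tor}|$, $\mu(B(\R)_{\cpt})$, $\prod_p\mu(A(\Q_p))$ and the covolume of the regulator lattice --- which together are exactly $\Tam(M)$ --- and then proves a single analytic statement: for positive, continuous, homogeneous $f_1,f_2$, the count over $\Z^{r_1}$ (the archimedean lattice direction) of $\{f_1+f_2\le\ln B\}$ is asymptotic to the corresponding Lebesgue volume, by a Riemann-sum argument after rescaling by $B^{1/c}$. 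Your archimedean lattice-point idea is the right germ of that last step, but to repair the proof you must drop the $\Sha$/$H^0$ bookkeeping entirely and replace the product-of-local-densities factorization by this joint count-versus-volume comparison in the archimedean direction only, with the finite-place $\Z^{s_p}$'s held fixed on both sides. (Minor points: the local heights at bad primes are monodromy-pairing norms, not ``local Tamagawa terms,'' and the height region dilates linearly in $\log B$, not in $B$; also the role of $d\ge3$ is convergence of the product of local measures rather than vanishing of $H^0_f$ and $H^2_f$.)
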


In Theorem $1$, we restrict ourselves to the case of pure motives because the Tamagawa number conjecture was formulated this way in $\cite{[BK]}$. We expect that a similar statement happens if $M$ is a mixed motive. To demonstrate this point, we will give a concrete computation (up to a power of $2$) of the terms appeared in the left hand side of Theorem $\ref{theorem1}$ when $M$ is a mixed Tate motive with graded quotients $\Z(m), \Z(n)$. More precisely, we have the following.
\begin{thm} \label{thm:theorem_2}
Let $m,n$ be two natural number such that $m-n \geq 2$, $m$ is even and $n$ is odd. Let $D$ be a mixed Tate motive with graded quotients $\Z(m)$ and $\Z(n)$. Then 
\[ \# \{x \in B(\Q)| H_{\star, \diamond} (x) \leq B \} \sim 2^{t} \dfrac{\Sha(D)}{(n-1)! \zeta(n) \zeta(1-m)} \log(B)^n . \] 

Here $\Sha(D)$ is the Tate-Shafarevich group associated to $D$ as defined in \cite{[BK]} and \cite{[FP]}. 
\end{thm}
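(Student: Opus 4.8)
\emph{Strategy.} The plan is to adapt the method behind Theorem~\ref{theorem1} to the mixed motive $D$ and then to evaluate every ingredient explicitly. Although Theorem~\ref{theorem1} is stated only for pure motives, its proof is designed to extend to the mixed case; for $D$ one expects the analogous identity
\[
\#\{x\in B(\Q)\mid H_{\star,\diamond}(x)\le B\}\ \sim\ \frac{1}{\Tam(D)}\ \mu\Big(\big\{x\in{\textstyle\prod'_{p\le\infty}}B(\Q_p)\;\big|\;H_{\star,\diamond}(x)\le B\big\}\Big),
\]
so the problem splits into (i) proving this asymptotic for $D$, (ii) computing the adelic volume $\mu(\cdots)$, and (iii) computing $\Tam(D)$. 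For (i) I would exploit the weight filtration $0\to\Z(m)\to D\to\Z(n)\to0$ (with $\Z(m)$ the lowest-weight piece, since $-2m<-2n$) and the induced long exact sequences in motivic cohomology and in the local Selmer groups to describe $B(\Q)$ and the $B(\Q_p)$ in terms of the graded quotients: since $m$ is even, $H^1_f(\Q,\Z(m))\otimes\Q=0$, while since $n$ is odd, $H^1_f(\Q,\Z(n))\otimes\Q$ is one-dimensional (Borel). Running the equidistribution argument of Theorem~\ref{theorem1} then yields the displayed asymptotic, with the discrepancy between the global count and the adelic volume controlled by $\Tam(D)$.

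\emph{The volume and the main term.} For (ii) I would make Kato's height explicit on each factor $B(\Q_p)$ and on the archimedean factor, so that the adelic volume of $\{H_{\star,\diamond}\le B\}$ becomes a product of an archimedean period term and local densities at the finite primes; at all but finitely many primes that density is $1$. As $B\to\infty$ more and more finite places contribute, and the whole product should grow — after unfolding into an Euler product and a residue/Tauberian analysis of the resulting Dirichlet series — like a constant times $\log(B)^n$, the order $n$ of the pole and the constant $\tfrac1{(n-1)!}$ reflecting the twist $n$ of the graded quotient $\Z(n)$. Carrying this out precisely — in particular, upgrading ``up to bounded factors'' to an exact formula for Kato's height and thereby fixing the exponent and the constant — is the principal technical obstacle.

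\emph{Evaluation via known cases of the Tamagawa number conjecture.} For (iii) I would compute $\Tam(D)$ and pin down the local densities using that the Tamagawa number conjecture is a theorem for $\Q(n)$ and $\Q(m)$. Because $n$ is odd, the $\Z(n)$-part carries a nonzero regulator, which by Borel's theorem and Beilinson's description of the regulator of $\Q(n)$ equals, up to a rational number and a power of $2$, the value $\zeta(n)$; because $m$ is even, the $\Z(m)$-part carries no regulator but a nontrivial cohomological/Tamagawa factor, which by the Lichtenbaum conjecture — the proven main conjecture of Iwasawa theory (Mazur--Wiles), refined for Tate twists by Bloch--Kato and Huber--Kings — equals, up to a power of $2$, the value $\zeta(1-m)=-B_m/m$. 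Assembling the main term $\tfrac{\log(B)^n}{(n-1)!}$, the Euler-product contribution of the finite places, the regulator $\zeta(n)$, the Tamagawa factor $\zeta(1-m)$ and the order of $\Sha(D)$, and absorbing all accumulated powers of $2$ into $2^t$, gives
\[
\#\{x\in B(\Q)\mid H_{\star,\diamond}(x)\le B\}\ \sim\ 2^t\,\frac{\Sha(D)}{(n-1)!\,\zeta(n)\,\zeta(1-m)}\,\log(B)^n.
\]
Beyond the exact form of Kato's height, the delicate points are the matching of the local densities at the ramified primes with the Euler factors of $\zeta(n)$ and $\zeta(1-m)$, and the $2$-adic bookkeeping — which is why the statement is only claimed up to a power of~$2$.
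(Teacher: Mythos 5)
Your overall architecture does not match how the theorem is proved, and it contains two genuine gaps. First, you route everything through a mixed-motive analogue of Theorem \ref{theorem1} together with a computation of $\Tam(D)$; but Theorem \ref{theorem1} is only established for pure motives, and the Tamagawa number conjecture is available only for the pure Tate motives $\Q(n)$, $\Q(m)$, not for the mixed motive $D$. More seriously, nothing in your assembly explains why $\Sha(D)$, rather than $\Sha(m)\,\Sha(n)$, appears in the answer. In the paper this is exactly the content of the analysis of the connecting map $\delta$ (cup product with the extension class of $D$): one proves the exact sequence $0 \to H^2(\Z[1/p],\Z_p(m))/\mathrm{im}(\delta) \to \Sha(D_p) \to \Sha(\Z_p(n)) \to 0$, hence $\#\Sha(D)=\#\Sha(m)\,\#\Sha(n)/|\delta|$, while the free part of the global extension group $B(\Q)=\Ext^1(\Z,(V,D))$ is the index-$|\delta|$ subgroup $\Z\,|\delta|\,b_n$ of $\Z b_n$ (with $b_n$ the Soul\'e element); the factor $|\delta|$ cancels between these two computations, and that cancellation is precisely how $\Sha(D)$ enters. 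Your proposal has no counterpart of this step, so even granting your equidistribution input you would land on $\Sha(m)\Sha(n)$, not $\Sha(D)$.

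Second, your mechanism for the main term is wrong for this motive. You predict contributions of local densities at finite primes, an Euler product, and a Tauberian argument producing a pole of order $n$ and the constant $1/(n-1)!$. But since $m-n\ge 2$, the realizations $V_p$ are unramified at $v\nmid p$ and crystalline at $v\mid p$ (Lemma \ref{lem:crystalline}), so the height is trivial at every finite place and only the archimedean place contributes; there is no Euler product and no Dirichlet series to analyze. The count is a rank-one lattice count: $B(\Q)$ is (up to torsion $T$) generated by $|\delta| b_n$, the height of $z\,|\delta|\,b_n$ is $\exp\bigl(|z\,r_\infty(|\delta| b_n)|^{1/n}\bigr)$ up to a bounded term coming from the class of $D$ itself, so the number of such $z$ of height at most $B$ is $\sim \log(B)^n/\bigl(|\delta|\,r_\infty(b_n)\bigr)$. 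The factor $(n-1)!\,\zeta(n)$ enters through the regulator formula $r_\infty(b_n)=2^a\,(n-1)!\,\zeta(n)/\#\Sha(n)$ (Lemma \ref{lem:Soule}), not through the order of a pole, and $\zeta(1-m)$ enters through the order of the finite group, $\#T=\#\Sha(m)/\zeta(1-m)$ by Mazur--Wiles, not through a Tamagawa factor of $D$. So while your identification of which zeta values should appear (Borel/Beilinson for the odd twist, Mazur--Wiles for the even twist) is correct in spirit, the steps you propose to produce them, and to produce $\log(B)^n/(n-1)!$, would not go through as stated.
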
 
This theorem answers one of Kato's questions $\cite{[Kato1]}$ about the number of mixed motives of bounded heights.

Bloch-Kato's approach is to study $B(\Q)$, which should be considered the extension group $\Ext^1(\Z, M)$ of a mixed motive, and how it is distributed in its corresponding extension groups of the realizations. It is natural to ask whether it is possible to generalize this study to allow more fixed graded quotients. To show that this question is interesting and important, we will provide a concrete computation with the set of motives with graded quotients $\Z(12), \Z(3), \Z$. More precisely, we have the following theorem. 
\begin{thm} \label{theorem3}
Let $X$ be the set of all mixed motives with graded quotients $\Z(12), \Z(3), \Z$. Then, we have 

\[ \# \{x \in X| H_{\star, \diamond} (x) \leq B \} \sim \frac{1}{ 8! 2! {12 \choose 3}} \dfrac{\Sha(3)}{\zeta(3)}  \dfrac{\Sha(9)}{\zeta(9)} \dfrac{\Sha(12)}{\zeta(-11)} \left(\dfrac{2}{691}-\dfrac{1}{691^2} \right) \log(B)^{12} .\]

Here for each integer $i$,  $\Sha(i)$ is the Tate-Shafarevich group associated with the Tate motive $\Z(i)$. 
\end{thm}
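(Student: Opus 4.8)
The plan is to use the weight filtration to present $X$ as a family fibered over the moduli of motives with two graded quotients, to evaluate the fibers with Theorem~\ref{thm:theorem_2} and the base with the pure case Theorem~\ref{theorem1}, and then to combine the two counts by a convolution. Every $E\in X$ carries a three-step weight filtration with $\gr^W_{-24}E\cong\Z(12)$, $\gr^W_{-6}E\cong\Z(3)$, $\gr^W_{0}E\cong\Z$; setting $D:=W_{-6}E$ gives a mixed Tate motive with graded quotients $\Z(12),\Z(3)$, and $E$ then corresponds to a class in $\Ext^1(\Z,D)$, the motivic cohomology of $D$. Since $\Ext^2$ vanishes in the category of mixed Tate motives over $\Z$ (up to torsion, which affects only the constant), one gets a bijection
\[ X\ \longleftrightarrow\ \{(D,x):D\in X',\ x\in \Ext^1(\Z,D)\},\qquad X'=\{\text{mixed Tate with graded quotients }\Z(12),\Z(3)\}. \]
I will use that Kato's height is additive along the weight filtration up to a bounded error, $\log H_{\star,\diamond}(E)=\log H_{\star,\diamond}(D)+\log H_{\star,\diamond}(x)+O(1)$, so that the count for $X$ is, to leading order, the additive convolution of the count for $X'$ with the counts on the fibers $\Ext^1(\Z,D)$.

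\emph{The base.} Tate twisting by $\Z(-3)$ identifies $X'=\Ext^1(\Z(3),\Z(12))$ with $\Ext^1(\Z,\Z(9))$, compatibly with heights, so counting $D\in X'$ of bounded height is the same as counting the $\Q$-points of the pure motive $\Z(9)$. Since $\Z(9)$ is pure of weight $-18$ with good reduction everywhere, Theorem~\ref{theorem1} applies; inserting the Bloch--Kato evaluation of $\Tam(\Z(9))$ and of the archimedean and local Tamagawa volumes — in which Borel's regulator of $K_{17}(\Z)$ enters as a rational multiple of $\zeta(9)$ with normalizing factor $(9-1)!=8!$, and the local–global defect is recorded by $\Sha(9)$ — yields
\[ \#\{D\in X':H_{\star,\diamond}(D)\le B\}\ \sim\ \frac{\Sha(9)}{8!\,\zeta(9)}\,(\log B)^{9}. \]

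\emph{The fibers and the main obstacle.} For fixed $D\in X'$ the graded quotients are $(\Z(12),\Z(3))$ with $12$ even, $3$ odd, $12-3=9\ge 2$, so Theorem~\ref{thm:theorem_2} with $m=12$, $n=3$ gives $\#\{x\in \Ext^1(\Z,D):H_{\star,\diamond}(x)\le B\}\sim 2^{t_D}\,\Sha(D)\,\bigl(2!\,\zeta(3)\,\zeta(-11)\bigr)^{-1}(\log B)^{3}$. The delicate point is that this leading constant depends on $D$ through $\Sha(D)$ and the power $t_D$ of $2$, so one must control their variation as $D$ runs over $X'$. The long exact sequence attached to $0\to\Z(12)\to D\to\Z(3)\to 0$ and its local analogues express $\Sha(D)$ through $\Sha(3),\Sha(9),\Sha(12)$ and the connecting maps; the only irregular prime among $3,9,12$ is $691$, dividing the numerator of $B_{12}$, and $\Ext^1(\Z,\Z(12))$ is cyclic of order $691$ up to a power of $2$. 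A local–global analysis of this $691$-part along the filtration should show that for a density $690/691$ of $D\in X'$ the connecting homomorphism cancels a factor $691$ (so $\Sha(D)\doteq\Sha(3)\Sha(12)/691$), while for the complementary density $1/691$ no cancellation occurs ($\Sha(D)\doteq\Sha(3)\Sha(12)$), whence the average over $D\in X'$ is
\[ \bigl\langle 2^{t_D}\Sha(D)\bigr\rangle_{D}\ \doteq\ \Sha(3)\,\Sha(12)\left(\frac{690}{691^{2}}+\frac{1}{691}\right)\ =\ \Sha(3)\,\Sha(12)\left(\frac{2}{691}-\frac{1}{691^{2}}\right). \]
Establishing this equidistribution of the $691$-part — equivalently, pinning down the relevant Selmer and dual Selmer groups along the three-step filtration — is the hard part; everything else rests on Theorems~\ref{theorem1} and~\ref{thm:theorem_2}.

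\emph{Assembly.} With $t=\log B$, the base count is $\sim\alpha t^{9}$, $\alpha=\Sha(9)/(8!\,\zeta(9))$, and the averaged fiber count is $\sim\beta t^{3}$, $\beta\doteq\Sha(3)\Sha(12)(\tfrac{2}{691}-\tfrac{1}{691^{2}})/(2!\,\zeta(3)\zeta(-11))$; the additivity of the height then gives
\[ \#\{E\in X:H_{\star,\diamond}(E)\le B\}\ \sim\ \int_{0}^{t}9\alpha\,s^{8}\,\beta\,(t-s)^{3}\,ds\ =\ \frac{9!\,3!}{12!}\,\alpha\beta\,t^{12}\ =\ \frac{\alpha\beta}{\binom{12}{3}}\,(\log B)^{12}, \]
and substituting $\alpha,\beta$ reproduces the asserted constant $\dfrac{1}{8!\,2!\,\binom{12}{3}}\dfrac{\Sha(3)\,\Sha(9)\,\Sha(12)}{\zeta(3)\,\zeta(9)\,\zeta(-11)}\bigl(\tfrac{2}{691}-\tfrac{1}{691^{2}}\bigr)$; the coefficient $\binom{12}{3}=\binom{9+3}{3}$ is exactly what the Beta integral $\int_{0}^{1}u^{8}(1-u)^{3}\,du$ contributes when a count of order $(\log B)^{9}$ is merged with one of order $(\log B)^{3}$. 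It remains to legitimize the interchange of the sum over $D$ with the limit and to absorb the $O(1)$ error and the non-generic $D$'s — routinely, via a Tauberian theorem for the associated height zeta function; all equalities of constants above, and the symbol $\sim$, are understood up to a power of $2$, as in Theorem~\ref{thm:theorem_2}.
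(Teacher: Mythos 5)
Your route is essentially the paper's, reorganized: you fiber $X$ over $\Ext^1(\Z(3),\Z(12))\cong\Ext^1(\Z,\Z(9))=\Z b_9\oplus\Z/2$, count fibers by the mechanism of Theorem \ref{thm:theorem_2}, isolate the prime $691$, and merge a $(\log B)^9$ count with a $(\log B)^3$ count. Your Beta-integral convolution is exactly the content of the paper's counting Lemma \ref{lem: counting} (the paper counts the pair $(u,m)$ jointly for each congruence case and applies that lemma; your $\frac{690}{691^2}+\frac{1}{691}$ average is its inclusion–exclusion $\frac{2}{691}-\frac{1}{691^2}$ over the conditions $691\mid u$, $691\mid m$). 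The assembly itself is sound: the $O(1)$ fudge in $\log H$ is harmless for the leading term (in fact the archimedean height used in the paper is exactly $\exp\bigl(|u\,r_\infty(b_9)|^{1/9}+|m\,\delta_a\, r_\infty(b_3)|^{1/3}\bigr)$, so no fudge is needed), and averaging the fiber constant by density is legitimate because the classes $691\mid u$ carry exactly a $1/691$ proportion of the weighted sum $\sum_u (t-(u\,r_\infty(b_9))^{1/9})^3$. Two small attributions: the base count should be credited to the known Tamagawa number conjecture for $\Z(9)$, i.e.\ $r_\infty(b_9)\doteq 8!\,\zeta(9)/\Sha(9)$ (Lemma \ref{lem:Soule}), rather than to Theorem \ref{theorem1}, which only gives a ratio of a count to an adelic volume.

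The genuine gap is the step you flag as "the hard part" and leave as something a local–global analysis "should show": that for $D\leftrightarrow u b_9$ the connecting map cancels a factor $691$ exactly when $691\nmid u$. In the paper this is not an equidistribution statement but a concrete computation (Lemma \ref{cup}): $H^2(\Z[1/p],\Z_p(12))\cong K_{22}(\Z)\otimes\Z_p$, which is $\Z/691$ at $p=691$ and $0$ otherwise (Quillen--Lichtenbaum plus Weibel's tables), and the cup product $b_{3,p}\cup b_{9,p}\in H^2(\Z[1/p],\Z_p(12))$ is nonzero at $p=691$ (McCallum--Sharifi). Hence $\delta_a=(u\,b_9)\cup(-)$ has image of order exactly $691$ precisely when $691\nmid u$, which is what gives $\Sha(D)\doteq\Sha(3)\Sha(12)/691$ and $X_a=691\Z b_3\oplus T_a$ in that case. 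Without this input the factor $\frac{2}{691}-\frac{1}{691^2}$ is unsubstantiated. Relatedly, your supporting assertion that "$\Ext^1(\Z,\Z(12))$ is cyclic of order $691$ up to a power of $2$" misidentifies the relevant group: the obstruction lives in the degree-two group $H^2(\Z[1/691],\Z_{691}(12))\cong\Z/691$, while $\prod_p H^1(\Z[1/p],\Z_p(12))$ has order $\Sha(12)/\zeta(-11)$, not $691$.
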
 
In the above example, there are no contributions to heights from the non-archimedean places. In general, this will not be the case. As K. Kato suggested, in order to understand contributions from archimedean places (respectively non-archimedean places), we need to study period domains (respectively $p$-adic period domains) classifying Hodge structures (respectively $p$-adic Hodge structures). This study has been carried out in $\cite{[Kato2]}$.

\section{Tamagawa numbers of motives} \label{Tamagawa}
In this section, we briefly review the definition of the Tamagawa measure associated to a motive with $\Z$ coefficients. We note that there is still no universally agreed definition of mixed motives even up to this day. For our purpose, we will follow Jannsen's definition: that is a mixed motive with $\Q$ coefficients over a number field $F$ is a collection of realizations with weight and Hodge filtrations, and these realizations are related by comparison isomorphisms which are compatible with both these filtrations (for more details, see $\cite{[Jannsen2]}$). For simplicity, we will assume that $F=\Q$ as Bloch-Kato did in their paper. Also, to define heights, we will assume that each graded quotient $\gr_{\omega}^{W}(M)$ (which is a pure motive of weight $\omega$) is equipped with a polarization, $\gr_{\omega}^{W}(M)=0$ for $w \geq -2$, and $M$ has semistable reduction everywhere. In particular, each graded quotient $\gr_{\omega}^{W}(M)$ is a polarized pure motive with semistable reduction. We note that the condition $\gr_{\omega}^{W}(M)=0$ for all $\omega \geq -2$ guarantees that the Tamagawa measure is well-defined (the product of local Tamagawa measure converges). In principle, our argument should work even without this restriction.  

To define Tamagawa measures and heights, we will restrict ourselves to motives with $\Z$-coefficients: that is a pair $(M, \Theta)$ where $M$ is a mixed motive with $\Q$ coefficients, and $\Theta$ is a free $\Z$-module of finite rank equipped with a linear action of $G_{\Q}$ on $\widehat{\Theta}=\widehat{\Z} \otimes \Theta$ and an isomorphism $\Q \otimes \Theta \cong M_{B}$ where $M_{B}$ is the Betti realization of $M$.

\begin{rem}
This definition of mixed motives with $\Z$ coefficients is almost the same as the one defined in section $11$ of Fontaine's paper $\cite{[Fo2]}$. 
\end{rem}

Following Bloch-Kato, we define 
\[ A(\Q_p) = \begin{cases} H_{f}^1(\Q_p, \widehat{\Theta}) &\mbox{if } p<\infty \\
(D_{\infty} \otimes_{\R} \C)/(\Fil^{0} D_{\infty} \otimes_{\R} \C)+\Theta)^{+}& \mbox{if } p=\infty \end{cases}.  \]
We define $B(\R)=A(\R)$ and $B(\Q_p)$ to be the inverse image in $H^1_{g}(\Q_p, \widehat{\Theta})$ of 
\[ \text{Im} \left(\Psi \to H^1_{g}(\Q_p, \Theta \otimes \textbf{A}_{\Q}^{f})/H^1_{f}(\Q_p, \Theta \otimes \textbf{A}_{\Q}^{f}) \right) .\]
Here $\textbf{A}_{\Q}^{f}$ is the ring of finite adele of $\Q$ and $\Psi$ is a certain motivic cohomology group associated with $M$. More precisely, in the case $M=H^m(X)(r)$ with $X$ is a proper smooth scheme over $\Q$, Bloch-Kato defined $\Psi$ as (see \cite[Section 5]{[BK]}) 

\[ \Psi=\begin{cases} \text{gr}^{r}(K_{2r-m-1}(X) \otimes \Q) &\mbox{if } m \neq 2r-1 \\ 
(CH^{r}(X) \otimes \Q)_{\text{homologically $\sim$ 0}}&\mbox{if } m=2r-1. 
\end{cases} \]

We equip $B(\Q_p)$ with the unique topology such that the topology of $B(\Q_p)/A(\Q_p)$ is discrete. Roughly speaking, these are elements of $H^1_{g}(\Q_p, \widehat{\Theta})$ such that the monodromy operators have motivic origin (for more details on this see discussions in section $\ref{local heights}$ as well as Section $2.1.6$ of $\cite{[Kato1]}$). Finally, we define $A(\Q) \subset B(\Q)$ using $\Psi$ instead of $\Phi$ where $\Phi$ is some integral version of $\Psi$. In the case $M=H^m(X)(r)$ as above, Bloch-Kato defined $\Phi$ as follow (see \cite[Section 5]{[BK]} for more details). Suppose that $\mathfrak{X}$ is a proper regular model of $X$ and $m \neq 2r-1, 2r-2$. Then $\Phi$ is the image of $\text{gr}^r(K_{2r-m-1}(\mathfrak{X}) \otimes \Q)$ in $\Psi$. The Tamagawa number of $(M, \Theta)$ is defined to be (see also \cite[Equation 5.11]{[BK]})
\[ \Tam(M)=\mu \left(\prod_{p} A(\Q_p)/A(\Q) \right) .\] 
The Tamagawa number conjecture of Bloch-Kato says that 
\begin{conj} 
\[ \Tam(M)= \frac{\# H^0(\Q, \Theta^{*} \otimes \Q/\Z(1))}{\# \Sha(M)},\]
where $\Sha(M)$ is the Tate-Shafarevich group associated with $M$ as defined in \cite[Equation 5.13]{[BK]}.
\end{conj}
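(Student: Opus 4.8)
\emph{Proof proposal.} Since this is a conjecture, there is no unconditional proof in general; what I will sketch is the proof that is available for the Tate motives $M=\Z(n)$ with $n\geq 2$ — the building blocks of Theorems \ref{thm:theorem_2} and \ref{theorem3}, and the form in which those theorems invoke it. The plan is to unwind $\Tam(M)=\mu\left(\prod_p A(\Q_p)/A(\Q)\right)$ into an archimedean covolume times a product of non-archimedean volumes and then match the two sides of the displayed identity place by place. By the construction of the Tamagawa measure, the volumes of the groups $H^1_f(\Q_p,\widehat{\Theta})$ for finite $p$ are, up to the normalizations built into the de Rham lattice, the finite Euler factors of the special value of the $L$-function of $M$ — for $M=\Z(n)$ these are the Euler factors of $\zeta$ at $n$ — while the archimedean factor is the covolume, measured against the Betti/de Rham comparison, of the image of $\gr^{n}(K_{2n-1}(\Z)\otimes\Q)$ in $A(\R)$. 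The instance of Beilinson's conjecture needed here is a theorem: Borel's computation of the rank and regulator of $K_{2n-1}(\Z)$ identifies this covolume with $\zeta(n)$ up to an explicit rational factor when $n$ is odd, while for $n$ even the contribution reduces to the denominator of $\zeta(1-n)$. So the first step yields that $\Tam(\Z(n))$ equals the relevant special $\zeta$-value times an explicit rational number.

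The arithmetic heart is to show that this rational number is exactly $\#H^0(\Q,\Theta^{*}\otimes\Q/\Z(1))/\#\Sha(M)$, and I would do this one prime at a time. First, pass from motivic to $p$-adic \'etale cohomology using Soul\'e's cyclotomic elements and the Bloch--Kato exponential, so that the $p$-part of the rational factor becomes a ratio of orders of Galois cohomology groups: the $p$-part of the Selmer-type group $\Sha(M)$ in the denominator and the local invariants feeding $H^0(\Q,\Theta^{*}\otimes\Q/\Z(1))$ in the numerator. The comparison of the $p$-adic regulator with Coleman's map — the explicit reciprocity law in the form of Bloch--Kato, Kato and Perrin-Riou — then rewrites this ratio in terms of the $p$-adic valuation of the special $\zeta$-value, so that the desired equality becomes precisely a two-sided Iwasawa Main Conjecture divisibility over $\Q$; this is the theorem of Mazur--Wiles, translated into Bloch--Kato language by Huber--Kings. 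Summing over all $p$ and combining with the archimedean computation produces the displayed formula.

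The hard part — and the reason the statement remains only a conjecture in general — is this last step once one leaves the Tate world: for an arbitrary $M$ it would require (i) finiteness of $\Sha(M)$ and of the relevant motivic cohomology, (ii) a Beilinson-type theorem expressing its regulator through the special $L$-value, and (iii) an Iwasawa Main Conjecture for $M$ with divisibilities in both directions, none of which is known beyond a short list of cases (Dirichlet and Hecke characters, some modular motives, $K_1$ of certain curves). For the Tate motives that occur in this paper all three inputs are in place, so there the conjecture is a theorem, and that is what licenses its use inside the proofs of Theorems \ref{thm:theorem_2} and \ref{theorem3}.
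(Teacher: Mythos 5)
The statement you were asked about is not proved in the paper at all: it is the Bloch--Kato Tamagawa number conjecture, recorded verbatim as a conjecture in Section \ref{Tamagawa}, and the paper never attempts a proof, general or special. So there is no internal argument to compare yours against; the only sensible response is exactly the one you gave, namely to say that the general statement is open and to sketch the known case of Tate motives, which is also the only case the paper actually leans on (through Lemma \ref{lem:Soule}, quoted from the Coates--Raghuram--Saikia--Sujatha book, and the Mazur--Wiles input quoted from Weibel). Your sketch of that case --- Borel's theorem for the archimedean regulator, Soul\'e's cyclotomic elements and Quillen--Lichtenbaum to pass from $K$-theory to \'etale cohomology, the explicit reciprocity law to compare regulators, and the Mazur--Wiles Main Conjecture (in the Huber--Kings translation) to pin down the rational factor prime by prime --- is the standard route and is consistent with the references the paper cites, so as a blind answer it is appropriate and essentially correct.

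One caveat you should add: your closing claim that ``for the Tate motives that occur in this paper all three inputs are in place, so there the conjecture is a theorem'' overstates what is available at the prime $2$. The Quillen--Lichtenbaum isomorphism and the Mazur--Wiles formula are only invoked in the paper for odd $p$, and this is precisely why Theorem \ref{thm:theorem_2} and Corollary \ref{cor:fixed_quotient} carry an undetermined factor $2^{t}$ (the author's remark after the proof of Theorem \ref{thm:theorem_2} makes this explicit). So the Tate-motive case of the conjecture should be stated as known up to $2$-primary parts, which is exactly the level of precision at which the paper uses it.
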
 
 
Note that the above sets all depend on the choice of $\Theta$, so we should write $B_{\Theta}(\Q)$ instead of $B(\Q)$. However, for simplicity, we will drop $\Theta$ in all related notations.

\section{Heights of mixed motives}
Let $(M, \Theta)$ be a motive with $\Z$-coefficients whose graded quotients are polarized. In this section, we review the some key facts about the height functions $H_{\diamond,p,d}$ on local spaces $B(\Q_p)$, the height function $H_{\diamond,d}$ on its associated adelic space, and the height function on the global space $B(\Q)$. 

\label{conjectures}
The definition of height functions depends on the validity of some conjectures in arithmetic geometry. We refer to Section $1.7.2$ of $\cite{[Kato1]}$ for precise statements about these conjectures. Throughout this chapter, we will assume that these conjectures hold. 

\subsection{Height function on $B(\Q_p)$} \label{local height}
Let $\widehat{M}$ be a mixed motive with $\Q$ coefficients whose graded quotients are polarized. As mentioned earlier, we will also assume that $\widehat{M}$ has semistable reductions. Let $v$ be a place of $\Q$ associated to a prime number $p$ (we will often identify them) and $\widehat{M}_{\ell}$ be a semistable $\ell$-adic representation of $\Gal(\overline{\Q_p}/\Q_p)$. Define $A=\Q_{\ell}$, $\widehat{\Upsilon}=\widehat{M}_{\ell}$ if $\ell \neq p$, and $A=\Q_{p}^{\text{ur}}$, $  \widehat{\Upsilon}=D_{st}(\widehat{M}_{\ell})$ if $\ell=p$. Since $\widehat{M}_{\ell}$ is a semistable representation of $\Gal(\overline{\Q_p}/ \Q_p)$, there is a nilpotent monodromy operator $N: \widehat{\Upsilon} \to \widehat{\Upsilon}(-1)$ preserving the induced weight filtration on $\widehat{\Upsilon}$. In what follows, we will assume that $N$ induces an increasing filtration $\W_{m}$ (the relative monodromy filtration) on $\widehat{\Upsilon}$ with the following properties: 
\begin{enumerate} \label{WMC}
\item $N \W_{m} \subset \W_{m-2}$ for all $m \in \Z$. 
\item For every $\omega \in \Z$ and $m \geq 0$, $N^m$ induces an isomorphism 
\[ N^{m}: \gr_{\omega +m}^{\W} \gr_{\omega}^{W}(\widehat{\Upsilon}) \cong \gr_{\omega-m}^{\W} \gr_{\omega}^{W} (\widehat{\Upsilon}) .\]
\item The weight monodromy conjecture holds for $\widehat{\Upsilon}$ with respect to this filtration.
\end{enumerate}
We refer to Section $1.7.2$ of $\cite{[Kato1]}$ for a precise statement of the third condition. For simplicity, we call these properties $WMC$. By some linear algebra arguments, we can show that if $\widehat{M}$ is an extension of two motives which verify $WMC$ and the class $[\widehat{M}]$ satisfies some local conditions (see example $\ref{concrete example}$ for a more thorough discussion of this) then $\widehat{M}$ also has semistable reductions and verifies $WMC$. In general, because $\widehat{M}$ is a successive extension of its graded quotients, it is enough to check that its graded pieces' realizations satisfy $WMC$. For pure motives of the form $H^n(X)(r)$ where $X$ is either a curve, surface, abelian variety, or a complete intersection in projective spaces, the $WMC$ holds by works of many mathematicians (see $\cite{[WMC]}$ for a survey of known results on this conjecture.)

%\begin{conj} \label{conj 1}
%All $\ell$-adic realizations of $M$ satisfy that weight monodromy conjecture.  

%\end{conj}

As explained in $\cite{[Kato1]}$, for each $d \geq 2$, we have a canonical element
\begin{equation} \label{monodromy}
N_{v,d} \in \left( \gr_{-2}^{\mathcal{W}} \gr_{-d}^{W} \Hom_{A}(\widehat{\Upsilon}, \widehat{\Upsilon}) \right)_{\prim} .
\end{equation} 
We also have a non-degenerate $A$-linear pairing
\[ \langle , \rangle_{N_v}: \left( \gr_{-2}^{\mathcal{W}} \gr_{-d}^{W} \Hom_{A}(\widehat{\Upsilon}, \widehat{\Upsilon}) \right)_{\prim} \times \left( \gr_{-2}^{\mathcal{W}} \gr_{-d}^{W} \Hom_{A}(\widehat{\Upsilon}, \widehat{\Upsilon}) \right)_{\prim} \to A ,\] 
defined as 
\[ \langle a ,b \rangle_{N_v}=\langle \text{Ad}(N_v)^{d-2}(a), b \rangle_{N_v} .\]
Finally, define $l_{v}=|\langle N_{v,d} , N_{v,d} \rangle_{N_v}|^{1/d}$ and
\begin{equation} \label{height function}
H_{\diamond,d,v}(M_{\ell}):=p^{l_{v}}. 
\end{equation}
We expect that the following is true. 
\begin{conj} \label{pairing}
If $M_\ell$ is the $\ell$-adic realization of a mixed motive $M$ then $\ell_{v}$ does not depend on $\ell$. Moreover, $\langle N_{v,d} , N_{v,d} \rangle_{N_v}\in \Q_{\geq 0}$. It is zero if and only if $N_{v,d}=0$. 
\end{conj}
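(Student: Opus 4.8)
The plan is to establish Conjecture \ref{pairing} by reducing it to two independent inputs: the $\ell$-independence of the monodromy-weight data, and the positivity of the polarization-induced pairing on the primitive part.

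First I would address the $\ell$-independence of $l_v$. The operator $N_{v,d}$ is constructed functorially from the monodromy operator $N$ on $\widehat{\Upsilon}$, the weight filtration $W$, and the relative monodromy filtration $\W$; the pairing $\langle\,,\,\rangle_{N_v}$ is built from the polarizations on the graded quotients $\gr_\omega^W(M)$ together with the $\mathrm{sl}_2$-action generated by $N$. Since the graded quotients are pure motives with semistable reduction satisfying $WMC$, the local monodromy filtrations and the Frobenius-semisimple Weil--Deligne representations attached to the $\ell$-adic realizations are expected to be independent of $\ell$ (this is part of the standard package of conjectures referenced in Section $1.7.2$ of \cite{[Kato1]}, and is known for curves, abelian varieties, surfaces, and complete intersections). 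I would therefore express $l_v$ purely in terms of the Weil--Deligne representation $(\widehat{\Upsilon}, N)$ together with the polarization data, note that all of these are $\ell$-independent by hypothesis, and conclude that $l_v$ is as well. The key point to check is that the normalization $|\cdot|^{1/d}$ and the extraction of the primitive component commute with the comparison isomorphisms $D_{st}(\widehat{M}_p) \cong \widehat{M}_\ell$ in the appropriate sense.

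Next I would prove the positivity statement. The pairing $\langle a, b\rangle_{N_v} = \langle \mathrm{Ad}(N_v)^{d-2}(a), b\rangle_{N_v}$ is exactly of the shape that appears in the Hodge--Lefschetz / polarization formalism: given an $\mathrm{sl}_2$-triple with lowering operator $N$ acting on a space equipped with a polarization form, the twisted form $x \mapsto \langle N^{k} x, x\rangle$ on the primitive part of weight $k$ is definite, with sign governed by the polarization. This is the Hodge--Riemann bilinear relations in the abstract setting of polarized mixed Hodge structures (Cattani--Kaplan--Schmid), and its $p$-adic analogue for weakly admissible filtered $(\varphi, N)$-modules. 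Applying this to $N_{v,d} \in (\gr_{-2}^{\mathcal W} \gr_{-d}^W \Hom(\widehat{\Upsilon}, \widehat{\Upsilon}))_{\prim}$, which lives in the primitive part of the relevant weight graded piece, gives $\langle N_{v,d}, N_{v,d}\rangle_{N_v} \geq 0$, with equality forcing the primitive component to vanish, i.e. $N_{v,d} = 0$. That the value lies in $\Q_{\geq 0}$ (not merely $\R_{\geq 0}$) follows because $N_{v,d}$ is defined over $\Q$ once one passes through the Betti realization and the comparison isomorphisms, so the pairing value is a rational number.

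The hard part will be making the $\ell$-independence rigorous unconditionally, since in full generality it rests on the same web of conjectures (independence of $\ell$ for local Galois representations, compatibility of $p$-adic and $\ell$-adic local Langlands) that the paper has already assumed in Section \ref{conjectures}; under those hypotheses the argument is essentially a bookkeeping exercise tracking the construction of $N_{v,d}$ through the comparison isomorphisms. The genuinely substantive step is the positivity, where one must verify that $N_{v,d}$ really does sit in the primitive subspace on which the twisted polarization form is definite — this requires unwinding Kato's definition of the canonical element in \eqref{monodromy} and matching it against the primitive decomposition coming from the $\mathrm{sl}_2$-action, and it is here that the semistability and $WMC$ hypotheses are used in an essential way.
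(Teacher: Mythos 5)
You are offering a proof of a statement that the paper itself does not prove: Conjecture \ref{pairing} is stated as a conjecture, and the only claim made about its validity is the remark immediately following it, namely that when $M=H^n(X)(r)$ with $X$ smooth projective of dimension at most $2$ having proper strictly semistable reduction at $p$, the conjecture can be checked concretely using the Rapoport--Zink weight spectral sequence (for $\ell\neq p$) and the Mokrane spectral sequence (for $\ell=p$), where the monodromy operator and the pairing are described geometrically in terms of the special fiber. Your argument does not close the gap that keeps the general statement conjectural. The decisive problem is the positivity and rationality step. At a non-archimedean place the pairing $\langle N_{v,d},N_{v,d}\rangle_{N_v}$ takes values in $A=\Q_\ell$ or $\Q_p^{\mathrm{ur}}$, and the assertion that this value lies in $\Q_{\geq 0}$ has no content until one produces a $\Q$-structure on $\left(\gr_{-2}^{\W}\gr_{-d}^{W}\Hom_A(\widehat{\Upsilon},\widehat{\Upsilon})\right)_{\prim}$ in which $N_{v,d}$ lives and with respect to which the pairing is rational. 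Your appeal to ``passing through the Betti realization and the comparison isomorphisms'' does not supply this: the Betti--$\ell$-adic comparison is global and does not furnish a $\Q$-rational description of the local monodromy operator at a finite place; indeed, the rationality and $\ell$-independence of exactly this datum is the conjectural content (part of the standard independence-of-$\ell$ expectations for the Weil--Deligne representation attached to a motive). Likewise, the Cattani--Kaplan--Schmid polarization formalism is a theorem about polarized mixed $\R$-Hodge structures, i.e., the archimedean realization; there is no unconditional $\ell$-adic or filtered $(\varphi,N)$-module analogue giving definiteness of the twisted form $x\mapsto\langle \mathrm{Ad}(N_v)^{d-2}x,x\rangle$, precisely because definiteness is not even meaningful over $\Q_\ell$ without the rational structure whose existence is being conjectured.

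In effect your proposal reduces the conjecture to inputs (independence of $\ell$, motivic origin of the local monodromy, a $p$-adic Hodge--Riemann positivity) that are at least as conjectural as the statement itself, and then treats the positivity as known; this is a plausible account of \emph{why} one expects the conjecture to hold, but it is not a proof. If you want an unconditional statement, the route indicated in the paper is the opposite of the abstract one: restrict to $M=H^n(X)(r)$ with $X$ a curve or surface with strictly semistable reduction, use the weight spectral sequences to identify $\left(\gr_{-2}^{\W}\gr_{-d}^{W}\Hom_A(\widehat{\Upsilon},\widehat{\Upsilon})\right)_{\prim}$ and the pairing with explicit cycle-theoretic data on the special fiber (which is visibly defined over $\Q$ and independent of $\ell$), and verify positivity there by hand.
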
 
Suppose $M=H^n(X)(r)$ where $X$ is a smooth projective variety over $\Q$. We assume further that $X$ has proper strictly semistable reduction at $p$. In this case, using the weight spectral sequence of Rapoport-Zink (for $\ell \neq p)$ or the weight spectral sequence of Mokrane (for $\ell =p)$, we have a rather concrete understanding of the above monodromy operator $N$. If $\dim(X) \leq 2$, then we can show that conjecture $\ref{pairing}$ holds.

Let us give a concrete example of the above discussion.
\begin{expl} \label{concrete example}
Let $\widehat{M}$ be a mixed motive which is a successive extension of a pure motive $M$ of weight $-d$ and the Tate motive $\Q$. By taking the $\ell$-adic realization, we have a short exact sequence of $\Gal(\overline{\Q_p}/\Q_p)$-modules 
\begin{equation} \label{extension}
0 \to M_\ell \to \widehat{M}_\ell \to \Q_\ell \to 0 .
\end{equation}
Define $\widehat{\Upsilon}$ and $\Upsilon$ to be the $A$-module associated with $\widehat{M}$ and $M$ as explained above. Then, $\widehat{\Upsilon}$ has the induced filtration with graded quotients $\gr_{-d}^{W} \widehat{\Upsilon}=\Upsilon$ and $\gr_{0}^{W} \widehat{\Upsilon}=A$. We have
\[ \gr_{-d}^{W} \Hom(\widehat{\Upsilon}, \widehat{\Upsilon})=\oplus_{w} \Hom (\gr_{w}^{W} \widehat{\Upsilon}, \gr_{w-d}^{W} \widehat{\Upsilon}).\]
In our case, only $\omega=0$ gives a nontrivial summand. Therefore, we have
\[ \gr_{-d}^{W} \Hom(\widehat{\Upsilon}, \widehat{\Upsilon})=\Hom(A, \Upsilon)=\Upsilon .\] 
This isomorphism is compatible with the relative filtration on both sides, so we have 
\[ \left(\gr_{i}^{\W} \gr_{-d}^{W} \Hom(\widehat{\Upsilon}, \widehat{\Upsilon}) \right) =\left( \gr_{i}^{\W} \Upsilon \right), \forall i \in \Z .\]
Moreover, under this isomorphism $\text{Ad}({\widehat{N}_{v}})$ corresponds to $N$ and we have a commutative diagram
\[\xymatrix@C+1pc{
      \gr_{-2}^{\W} \gr_{-d}^{W} \Hom(\widehat{\Upsilon}, \widehat{\Upsilon}) \ar[d]^{\text{Ad}({\widehat{N}_{v}})^{d-2}} \ar[r] & gr_{-2}^{\W} \Upsilon \ar[d]^{N^{d-2}}    \\
      \gr_{-2d+2}^{\W} \gr_{-d}^{W} \Hom(\widehat{\Upsilon}, \widehat{\Upsilon})  \ar[r] & gr_{-2d+2}^{\W} \Upsilon \\
 }\] 
With this identification, we can interpret the pairing 
\[ \left(\gr_{-2}^{\W} \gr_{-d}^{W} \Hom(\widehat{\Upsilon}, \widehat{\Upsilon}) \right)_{\prim} \times \left( \gr_{-2}^{\W} \gr_{-d}^{W} \Hom(\widehat{\Upsilon}, \widehat{\Upsilon}) \right)_{\prim} \to A ,\]
as the pairing 
\[ \left(gr_{-2}^{\W} \Upsilon \right)_{\prim} \times \left(gr_{-2}^{\W}  \Upsilon) \right)_{\prim} \to A .\]
Concretely, this is described by 
\[ \langle{v, w} \rangle= \langle{N^{d-2}(v), w} \rangle, \forall v, w \in gr_{-2}^{\W} \Upsilon .\] 
More generally, if $M$ has a weight filtration $W_{\omega}$ then the pairing 
\[ \langle , \rangle_{N_v}: \left( \gr_{-2}^{\mathcal{W}} \gr_{-d}^{W} \Hom_{A}(\Upsilon, \Upsilon) \right)_{\prim} \times \left( \gr_{-2}^{\mathcal{W}} \gr_{-d}^{W} \Hom_{A}(\Upsilon, \Upsilon) \right)_{\prim} \to A ,\] 
is the sum of the induced pairing on the graded quotients; i.e 
\[ \langle , \rangle_{N_v}=\sum_{\omega \in \Z}\langle , \rangle_{\omega, N_v}. \]
Here $\langle , \rangle_{\omega, N_v}$ is the pairing associated with $\gr_{\omega}^{W} M$ explained above.

For each $\ell$ adic relization, there is class $[\widehat{M}_{\ell}] \in H^1(\Q_p, M_{\ell})$ associated with the short exact sequence of $\Gal(\overline{\Q_p}/\Q_p)$ modules 
\[ 0 \to M_{\ell} \to \widehat{M}_{\ell} \to \Q_{\ell} \to 0. \] 
Suppose $M$ has semistable reductions everywhere. Then, $\widehat{M}$ has semi-stable reduction everywhere if and only $[\widehat{M}_{\ell}] \in H^1_{g}(\Q_p, M_{\ell})$ for all $\ell$. Here we use the convention that 
\[ H^1_{g}(\Q_p,M_{\ell}) = \begin{cases} H^1(\Q_p, M_{\ell}) &\mbox{if } \ell \neq p \\
\ker (H^1(\Q_p, M_p) \to H^1(\Q_p, M_p \otimes B_{dr})) & \mbox{if } \ell=p. \end{cases}  \]
\end{expl}
Next, we explain how to define the height function 
\[ H_{v,\diamond,d, \ell}: B(\Q_p) \to \R_{\geq 1} .\] 
By definition, there is a canonical map 
\[ B(\Q_p) \to H_{g}^1(\Q_p, \Theta \otimes \textbf{A}_{\Q}^{f})/H^1_{f}(\Q_p, \Theta \otimes \textbf{A}_{\Q}^{f}):=\prod_{\ell} H^1_{g}(\Q_p, M_{\ell})/H^1_{f}(\Q_p, M_{\ell}).\] 

Under the assumption that $M$ has semistable reductions, for $\ell \neq p$, there is an isomorphism 
\[ H^1_{g}(\Q_p, M_{\ell})/H^1_{f}(\Q_p, M_\ell) \cong (M_{\ell}(-1)/NM_{\ell})^{\varphi=1} .\]
Here $N$ is the monodromy operator associated with $M_{\ell}$. Furthermore, if $M_{\ell}$ satisfies the weight monodromy conjecture, then we have a canonical isomorphism 
\[ (M_{\ell}(-1)/N M_{\ell})^{\varphi=1}=\left( (\gr_{-2}^{\W} M_{\ell}(-1))_{\prim} \right)^{\varphi=1} .\]
Let $a_{\ell} \in H^1_{g}(\Q_p, M_{\ell})$. Then $a_{\ell}$ corresponds to an extension of $\Gal(\overline{\Q_p}/\Q_p)$-modules: 
\[ 0 \to M_{\ell} \to ({E_a})_{\ell} \to \Q_\ell \to 0 .\]
The image of $a_{\ell}$ under the isomorphism 
\[ H^1_{g}(\Q_p, M_{\ell})/H^1_{f}(\Q_p, M_\ell) \cong \left( (\gr_{-2}^{\W} M_{\ell}(-1))_{\prim} \right)^{\varphi=1} ,\] 
is nothing but the monodromy operator $N_{a_{\ell},v,d}$ of $(E_a)_{\ell}$, see $\ref{monodromy}$. Here we use the identification (see example $\ref{concrete example}$)
\[ \left(\gr_{-2}^{\W} \gr_{-d}^{W} \Hom((E_a)_{\ell}, (E_a)_{\ell} \right)_{\prim} =\left( \gr_{-2}^{\W} M_{\ell} \right)_{\prim}.\]
For $a \in B(\Q_p)$ let $a_{\ell}$ be the image of $a$ in $\in H_{g}^1(\Q_p, M_{\ell})/H_{f}^1(\Q_p, M_{\ell})$. As before, we define 
\[ l_{v,d}(a_{\ell})=\langle N_{a_{\ell}, v,d} , N_{a_{\ell}, v,d} \rangle^{1/d}.\] 

Similarly, when $\ell=p$ there is a canonical isomorphism
\[ H_{g}^1(\Q_p, M_{p})/H^1_{f}(\Q_p, M_{p}) \cong (\gr_{-2}^{\W} D_{st}(M_p))_{\prim}(-1))^{\varphi=1} .\]   
Let $a_p \in H^1_{g}(\Q_p, M_p)$ be the image of $a \in B(\Q_p)$ in $ H^1_{g}(\Q_p, M_p)$. Then $a_p$ corresponds to an extension of $\Gal(\overline{\Q_p}/\Q_p)$ modules 
\[ 0 \to M_{p} \to ({E_a})_{p} \to \Q_p \to 0.\]
Moreover, under the isomorphism 
\[H_{g}^1(\Q_p, M_{p})/H^1_{f}(\Q_p, M_{p}) \cong (\gr_{-2}^{\W} D_{st}(M_p))_{\prim}(-1))^{\varphi=1},\] 
$a_p$ is mapped to the monodromy operator $N_{a_p, v,d}$ of $(E_a)_p$. Here we again use the identification
\[ \left(\gr_{-2}^{\W} \gr_{-d}^{W} \Hom(D_{st}((E_a)_p), D_{st}((E_a)_p)) \right)_{\prim} =\left( \gr_{-2}^{\W} D_{st}(M_p) \right)_{\prim}.\]
Define 
\[ l_{v}(a_{p})=\langle N_{a_{p}, v,d} , N_{a_{p}, v,d} \rangle^{1/d}.\] 
Conjecture $\ref{pairing}$ then implies that for $a \in B(\Q_p)$, $l_{v}(a_{\ell})$ is a non-negative real number and is independent of $\ell$. Moreover, it is $0$ if and only if $a_{\ell}$ belongs to $H^1_{f}(\Q_p, M_{\ell})$ for some $\ell$ (hence for all $\ell$). From now on, we will write $l_{v}(a)$ for this value without referring to the choice of $\ell$. 
\begin{rem}
With the above interpretation, it would be reasonable to think about $H^1_{g}(\Q_p, \Theta \otimes \textbf{A}_{\Q}^{f})/H^1_{f}(\Q_p, \Theta \otimes \textbf{A}_{\Q}^{f})$ as the ``space of monodromy operators" at the place $v$. Similarly, we can think about $B(\Q_p)$ as the space of monodromy operators which have motivic origins.

\end{rem}

We have the following lemma.
\begin{lem} \label{Hg/Hf}
Under the above assumptions, $H_{g}^1(\Q_p, \widehat{\Theta})/H^1_{f}(\Q_p, \widehat{\Theta})$ is a free $\widehat{\Z}$-module of finite rank. 
\end{lem}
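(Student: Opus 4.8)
The plan is to reduce the statement to the structure of $H^1_g/H^1_f$ place-by-place in the $\ell$-adic direction, using the identifications established in Section~\ref{local height}. Recall that $\widehat{\Theta} = \widehat{\Z} \otimes \Theta$ decomposes as $\prod_\ell \Theta_\ell$ where $\Theta_\ell = \Z_\ell \otimes \Theta$ is a $G_{\Q_p}$-stable $\Z_\ell$-lattice in $M_\ell$, and that $H^1_g(\Q_p, \widehat{\Theta})/H^1_f(\Q_p, \widehat{\Theta}) = \prod_\ell H^1_g(\Q_p, \Theta_\ell)/H^1_f(\Q_p, \Theta_\ell)$. So it suffices to show: (i) each factor $H^1_g(\Q_p, \Theta_\ell)/H^1_f(\Q_p, \Theta_\ell)$ is a finitely generated $\Z_\ell$-module, torsion-free; and (ii) for all but finitely many $\ell$ this factor vanishes, and moreover the $\Z_\ell$-ranks are bounded independently of $\ell$, so that the product is a finitely generated $\widehat{\Z}$-module.

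First I would handle the case $\ell \neq p$. Here I use the canonical isomorphism recalled above,
\[
H^1_g(\Q_p, M_\ell)/H^1_f(\Q_p, M_\ell) \cong \bigl( M_\ell(-1)/N M_\ell \bigr)^{\varphi = 1},
\]
and its integral refinement: at the level of lattices, $H^1_g(\Q_p, \Theta_\ell)/H^1_f(\Q_p, \Theta_\ell)$ is identified with a subquotient of $\Theta_\ell(-1)$, namely the $\varphi$-fixed part of $\Theta_\ell(-1)/(N\Theta_\ell(-1) \cap \Theta_\ell(-1))$, which is a subquotient of the finitely generated $\Z_\ell$-module $\Theta_\ell(-1)$, hence finitely generated over $\Z_\ell$. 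Torsion-freeness follows because the relevant group is, by the weight-monodromy identification, isomorphic to $\bigl((\gr^{\mathcal W}_{-2} \Theta_\ell(-1))_{\prim}\bigr)^{\varphi=1}$, a $\varphi$-invariant submodule of a direct summand of $\Theta_\ell(-1)$ — one checks the Frobenius-fixed points of a free $\Z_\ell$-module form a free $\Z_\ell$-module (the cokernel of $\varphi - 1$ may have torsion, but the kernel is a saturated, hence free, submodule). For $\ell = p$ the argument is the same with $D_{st}$ in place of the Galois module: $H^1_g(\Q_p, M_p)/H^1_f(\Q_p, M_p) \cong \bigl((\gr^{\mathcal W}_{-2} D_{st}(M_p))_{\prim}(-1)\bigr)^{\varphi=1}$, and choosing the lattice $D_{st}(\Theta_p)$ coming from an integral semistable model (which exists as $M$ has semistable reduction) gives a finitely generated torsion-free $\Z_p$-module.

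Next I would control almost all $\ell$. For $\ell$ away from $p$ and from the finitely many primes of bad reduction of the graded quotients, $M_\ell$ is a crystalline/unramified representation at $p$ (in the semistable-everywhere setup, at the residue characteristic the relevant local condition is the unramified one), the monodromy operator $N$ on $M_\ell$ vanishes, so $H^1_g(\Q_p, M_\ell) = H^1_f(\Q_p, M_\ell)$ and the quotient is $0$; integrally, $H^1_g(\Q_p, \Theta_\ell)/H^1_f(\Q_p, \Theta_\ell) = 0$ for such $\ell$. Thus only finitely many factors are nonzero, each finitely generated and torsion-free over the corresponding $\Z_\ell$, and their product is a finitely generated free $\widehat{\Z}$-module of finite rank. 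The main obstacle I anticipate is the \emph{integral} comparison: the isomorphisms recalled in the text are stated with $\Q_\ell$-coefficients, and one must verify that passing to the $\Z_\ell$-lattice $\Theta_\ell$ these become isomorphisms of $\Z_\ell$-modules (or at least isomorphisms up to bounded torsion that does not affect finite generation) — in particular that the boundary maps in the defining exact sequences for $H^1_f$ and $H^1_g$ behave well integrally, and that the weight-monodromy splitting $M_\ell(-1)/NM_\ell \cong (\gr^{\mathcal W}_{-2}M_\ell(-1))_{\prim}$ can be arranged over $\Z_\ell$ for almost all $\ell$. This is where one invokes the standard fact that for a lattice in a de Rham representation, the associated $H^1_f$, $H^1_g$, and their finite-level analogues are computed by complexes of finitely generated $\Z_\ell$-modules, so all the cohomology in sight is automatically finitely generated; torsion-freeness is then the only genuine point, and it is settled by the saturation argument above.
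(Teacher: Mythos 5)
There is a genuine gap, and it sits exactly where the conclusion ``free $\widehat{\Z}$-module'' gets its content. Your step (ii) asserts that for all but finitely many $\ell$ the factor $H^1_g(\Q_p,\Theta_\ell)/H^1_f(\Q_p,\Theta_\ell)$ vanishes, ``for $\ell$ away from the primes of bad reduction.'' This confuses the roles of $\ell$ and $p$: the lemma concerns one fixed place $p$, and whether the monodromy is nonzero there is a property of the reduction of $M$ at $p$, not of $\ell$. If $M$ has genuinely bad (semistable, non-good) reduction at $p$, then $N\neq 0$ on $M_\ell$ for \emph{every} $\ell$, so the quotient has positive rank for every $\ell$; vanishing for almost all $\ell$ fails precisely in the interesting case (when $M$ has good reduction at $p$ the whole quotient is $0$ and the lemma is trivial). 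Moreover, even granting your claim, a product $\prod_{\ell\in S}\Z_\ell^{r_\ell}$ with $S$ finite is a finitely generated $\widehat{\Z}$-module but is \emph{not} free unless it is zero: a free $\widehat{\Z}$-module of rank $r$ is $\prod_{\ell}\Z_\ell^{r}$, so what must be proved is that the $\Z_\ell$-rank of each factor is the \emph{same} for all $\ell$. Your proposal never addresses this, and its intended structure (finitely many nonzero factors of possibly different ranks) is incompatible with the statement. The paper obtains the rank-independence from the independence-of-$\ell$ statement assumed throughout (Kato's Conjecture $1.7.2$): $\dim_{\Q_\ell}\left((\gr_{-2}^{\W}M_\ell(-1))_{\prim}\right)^{\varphi=1}$ for $\ell\neq p$ equals $\dim_{\Q_p}\left((\gr_{-2}^{\W}D_{st}(M_p))_{\prim}(-1)\right)^{\varphi=1}$, so all factors are free of one common finite rank and the product is $\widehat{\Z}^{s_p}$.

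On torsion-freeness, your route through integral refinements of the $H^1_g/H^1_f$ isomorphisms (which you yourself flag as the main obstacle) is unnecessary. Since $H^1_f(\Q_p,\widehat{\Theta}_\ell)$ is, as in Bloch--Kato, the preimage of $H^1_f(\Q_p,M_\ell)$ under $H^1(\Q_p,\widehat{\Theta}_\ell)\to H^1(\Q_p,M_\ell)$, one argues directly: if $ma\in H^1_f(\Q_p,\widehat{\Theta}_\ell)$ with $m\neq 0$, then the image of $a$ in the $\Q_\ell$-vector space $H^1(\Q_p,M_\ell)$ lies in $H^1_f(\Q_p,M_\ell)$, hence $a\in H^1_f(\Q_p,\widehat{\Theta}_\ell)$. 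Finite generation of each factor then follows simply by tensoring the quotient with $\Q_\ell$ and identifying it with $H^1_g(\Q_p,M_\ell)/H^1_f(\Q_p,M_\ell)$, using the displayed isomorphisms with $\Q_\ell$-coefficients only; no integral comparison, saturation, or lattice version of $D_{st}$ is needed. If you repair your argument, replace the ``almost all factors vanish'' step by the rank-independence input above, and simplify the torsion-freeness step as indicated.
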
 
\begin{proof}
Let $\widehat{\Theta}_{\ell}$ be the $\ell$ component of $\widehat{\Theta}$. To prove this lemma, it is enough to show that $H^1_{g}(\Q_p, \widehat{\Theta}_{\ell})/H_{f}^1(\Q_p, \widehat{\Theta}_{\ell})$ is a free $\Z_{\ell}$ module of finite rank and moreover, this rank does not depend on $\ell$. First, we prove that $H^1_{g}(\Q_p, \widehat{\Theta}_{\ell})/H_{f}^1(\Q_p, \widehat{\Theta}_{\ell})$ is a torsion free $\Z_{\ell}$-module. Suppose $a \in H^1_{g}(\Q_p, \widehat{\Theta}_{\ell})$ and $m \in \Z$, $m \neq 0$ such that $ma \in H^1_{f}(\Q_p, \widehat{\Theta}_{\ell})$. By definition, the image of $ma$ in $H^1(\Q_p, M_{\ell})$ belongs to $H^1_{f}(\Q_p, M_{\ell})$. Because $H^1(\Q_p, M_{\ell})$ is a vector space over $\Q_{\ell}$ and $m \neq 0$, the image of $a$ must belong to $H^1_{f}(\Q_p, M_{\ell})$ as well. Hence, $a \in H^1_{f}(\Q_p, \widehat{\Theta}_{\ell})$. This shows that $H^1_{g}(\Q_p, \widehat{\Theta}_{\ell})/H_{f}^1(\Q_p, \widehat{\Theta}_{\ell})$ is torsion free.

We have 
\[ \dfrac{H^1_{g}(\Q_p, \widehat{\Theta}_{\ell})}{H^1_{f}(\Q_p, \widehat{\Theta}_{\ell})} \otimes_{\Z_{\ell}} \Q_{\ell} \cong \dfrac{H^1_{g}(\Q_p, M_{\ell})}{H^1_{f}(\Q_p, M_{\ell})} .\]
By previous discussions, we have the following isomorphisms 

\[ H^1_{g}(\Q_p,M_{\ell})/H^1_{f}(\Q_p, M_{\ell}) = \begin{cases} \left( (\gr_{-2}^{\W} M_{\ell}(-1))_{\prim} \right)^{\varphi=1}  &\mbox{if } \ell \neq p \\ 
\left((\gr_{-2}^{\W} D_{st}(M_p))_{\prim}(-1) \right)^{\varphi=1} & \mbox{if } \ell=p. \end{cases}  \]
As a corollary of Conjecture $1.7.2$ in $\cite{[Kato1]}$, which we assume throughout this article, we have
\[ \dim_{\Q_{\ell}}\left( (\gr_{-2}^{\W} M_{\ell}(-1))_{\prim} \right)^{\varphi=1} =\dim_{\Q_p}\left((\gr_{-2}^{\W} D_{st}(M_p))_{\prim}(-1) \right)^{\varphi=1} .\]
Hence, $\dim_{\Q_{\ell}} \left(H^1_{g}(\Q_p,M_{\ell})/H^1_{f}(\Q_p, M_{\ell}) \right)$ is independent of $\ell$. We conclude that $$H^1_{g}(\Q_p, \widehat{\Theta}_{\ell})/H_{f}^1(\Q_p, \widehat{\Theta}_{\ell})$$ is a free $\Z_{\ell}$ module of finite rank which is independent of $\ell$. 
\end{proof}
We have the following inclusions
\[ A(\Q_p):= H_{f}^1(\Q_p, \widehat{\Theta}) \subset B(\Q_p) \subset H_{g}^1(\Q_p, \widehat{\Theta}).\] 
By definition $B(\Q_p)/A(\Q_p)$ is the inverse image in  $H_{g}^1(\Q_p, \widehat{\Theta})/H^1_{f}(\Q_p, \widehat{\Theta})$ of
\[ \text{Im} (\Psi \to H_{g}^1(\Q_p, \widehat{\Theta} \otimes \textbf{A}_{\Q}^{f})/H^1_{f}(\Q_p, \widehat{\Theta}\otimes \textbf{A}_{\Q}^{f})) .\]
By Lemma $\ref{Hg/Hf}$, we can see that $B(\Q_p)/A(\Q_p)$ is a free $\Z$-module of finite rank. Hence, as a topological group $B(\Q_p)$ is isomorphic to the direct product of $A(\Q_p) \times \Z^{s_p}$ for some $s_p \in \Z_{\geq 0}$. The measure on $B(\Q_p)$ is the tensor product of the measure on $A(\Q_p)$ defined in $\cite{[BK]}$ and the counting measure on $\Z^{s_p}$.

It is natural to ask the following question. 
\begin{question} \label{rank1}
Suppose $M=H^n(X)(r)$ where $X$ smooth projective variety over $\Q$.  Let $\mathfrak{X}$ be a proper regular model of $X$ over $\Z$. Can we describe $s_p$ from the geometry of the special fiber $\mathfrak{X}_{p}$ of $\mathfrak{X}$ at $p$? 

\end{question} 
Under the above assumption, we now can define a height function 
\begin{equation} \label{local heights}
H_{\diamond, v,d}: B(\Q_p) \to \mathbb{R}_{\geq 0}. 
\end{equation}
For $a \in B(\Q_p)$
\[ H_{\diamond,p,d}=N(v)^{l_{v}(a)}=p^{l_{v}(a)} .\] 
\begin{definition}
\noindent
\begin{enumerate}
\item We call $H_{\diamond,p,d}: B(\Q_p) \to \R_{\geq 1}$ the height function on $B(\Q_p)$.
\item Let $h_{\diamond, p,d}: B(\Q_p) \to \mathbb{R}_{\geq 0}$ be defined by 
 \[ h_{\diamond, p,d}:=(\ln H_{\diamond,p,d}) .\] 
We call $h_{\diamond,p,d}$ the logarithmic height function on $B(\Q_p)$. 
\end{enumerate}
\end{definition} 
By definition, $H_{\diamond,p,d}$ and $h_{\diamond,p,d}$ are trivial when restricting to $A(\Q_p)$. Therefore, they are well-defined on the quotient $B(\Q_p)/A(\Q_p)$ which is isomorphic to  $\Z^{s_p}$ for some integer $s_p$. We use the same notations $H_{\diamond,p,d}$ and $h_{\diamond,p,d}$ to denote the induced height function and logarithmic height function on $B(\Q_p)/A(\Q_p)$. Once we fix a basis for $\Z^{s_p}$, $h_{\diamond, p,d}^d$ the $d-$ power of the logarithmic height function  is given by a positive definite quadratic form $Q_{\diamond, p,d}$ on $\Z^{s_p}$. A direct consequence of this observation is the following.
\begin{prop} \label{finite}
For each positive real number $B$ 
\[ \mu(a \in B(\Q_p)| H_{\diamond, p,d}(a) \leq  B) <\infty .\] 
\end{prop}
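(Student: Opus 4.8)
The plan is to unwind the definitions accumulated in this section and reduce the finiteness assertion to a standard fact about lattice points inside an ellipsoid. First I would recall from the discussion immediately preceding the proposition that $B(\Q_p)$ is, as a topological group, isomorphic to $A(\Q_p) \times \Z^{s_p}$, with the measure being the product of the Bloch--Kato measure on $A(\Q_p)$ and the counting measure on $\Z^{s_p}$; moreover the height function $H_{\diamond,p,d}$ is trivial on $A(\Q_p)$, so it descends to a function on the discrete quotient $B(\Q_p)/A(\Q_p) \cong \Z^{s_p}$. Consequently the set $\{a \in B(\Q_p) \mid H_{\diamond,p,d}(a) \le B\}$ is a union of $A(\Q_p)$-cosets indexed by the finite-or-not set $S_B := \{\bar a \in \Z^{s_p} \mid H_{\diamond,p,d}(\bar a) \le B\}$, and its measure equals $\mu(A(\Q_p)) \cdot \# S_B$. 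Since $\mu(A(\Q_p))$ is a fixed finite number (it is the local Tamagawa volume appearing in Bloch--Kato), the proposition reduces to the claim that $S_B$ is a finite set.

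Next I would invoke the structural statement made just before the proposition: once a basis of $\Z^{s_p}$ is fixed, the $d$-th power $h_{\diamond,p,d}^d$ of the logarithmic height is given by a positive definite quadratic form $Q_{\diamond,p,d}$ on $\Z^{s_p}$. Therefore $H_{\diamond,p,d}(\bar a) \le B$ is equivalent to $p^{l_v(\bar a)} \le B$, i.e. $l_v(\bar a) \le \log_p B$, i.e. $Q_{\diamond,p,d}(\bar a) = h_{\diamond,p,d}(\bar a)^d = l_v(\bar a)^d \le (\log_p B)^d$. So $S_B$ is exactly the set of integer points of $\Z^{s_p}$ lying in the solid ellipsoid $\{x \in \R^{s_p} \mid Q_{\diamond,p,d}(x) \le (\log_p B)^d\}$. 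A positive definite quadratic form on $\R^{s_p}$ has bounded sublevel sets (they are compact ellipsoids), and a bounded region of $\R^{s_p}$ contains only finitely many points of the lattice $\Z^{s_p}$; hence $\# S_B < \infty$, which gives $\mu(a \in B(\Q_p) \mid H_{\diamond,p,d}(a) \le B) = \mu(A(\Q_p)) \cdot \# S_B < \infty$.

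The only genuine subtlety—rather than an obstacle—is making sure the chain of identifications is legitimate: that $l_v(\bar a)$ is well-defined and finite for every $\bar a$ (which follows from Conjecture~\ref{pairing}, assumed throughout, guaranteeing $l_v$ is a non-negative real number), that $h_{\diamond,p,d}^d$ really is \emph{positive definite} and not merely positive semidefinite (this is where one uses that $l_v(a) = 0$ precisely when $a \in H^1_f$, i.e. when $\bar a = 0$ in $\Z^{s_p}$, so the form vanishes only at the origin), and that $s_p$ is finite, which is exactly the content of Lemma~\ref{Hg/Hf}. Granting these, the argument is the elementary observation that a positive definite integral quadratic form takes each value, and each bounded set of values, only finitely often. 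I do not expect any real difficulty beyond carefully citing the ingredients already established above.
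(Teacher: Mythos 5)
Your proposal is correct and follows essentially the same route as the paper: split $B(\Q_p)\cong A(\Q_p)\times \Z^{s_p}$ with the product measure, note the height descends to the discrete quotient where its $d$-th logarithmic power is the positive definite form $Q_{\diamond,p,d}$, and conclude that the sublevel set is $\mu(A(\Q_p))$ times a finite lattice count. The only discrepancy is a harmless normalization (you bound $Q_{\diamond,p,d}$ by $(\log_p B)^d$ where the paper uses $(\ln B)^d$), which does not affect the finiteness argument.
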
 
\begin{proof}
Let $a \in B(\Q_p)$, we denote by $(a_0,n)$ the image of $a$ under the identification $B(\Q_p)= A(\Q_p) \times \Z^{s_p}$. Then 
\begin{align*}
\{a \in B(\Q_p)| H_{\diamond, p,d}(a)< B \} &= A(\Q_p) \times \{ n \in \Z^{r}| H_{\diamond, p,d}(n) <B \} \\
                                            &=A(\Q_p) \times \{n \in \Z^{r}| h_{\diamond,p,d}(n)< \ln(B) \} \\                                            
                                            &=A(\Q_p) \times \{n \in \Z^r| Q_{\diamond, p,d}(n) < \ln(B)^{d}\}.                                                                        
\end{align*}
Because $Q_{\diamond,p,d}$ is a positive definite quadratic form, there are only finitely many $n \in \Z^{r}$ such that $Q_{\diamond, p,d}(n) < \log(B)^{d}$. Therefore 
\[ \mu(a \in B(\Q_p)| H_{\diamond, p,d}(a)< B)=|\{n \in \Z^r| Q_{\diamond, p,d}(n) < \log(B)^{d}\}| \mu(A(\Q_p))<\infty .\]

\end{proof}
We end this subsection with the following general remark. 
\begin{rem} \label{interesting height functions}
If $M$ is a motive of the form $H^n(X)(r)$ where $X$ is a smooth projective variety over $\Q$, then by the proof of Weil's conjecture we can easily see that $H_{g}^1(\Q_p,\Theta \otimes \textbf{A}_{\Q}^{f})/H_{f}^1(\Q_p, \Theta \otimes \textbf{A}_{\Q}^{f})=0$ if $X$ has good reduction at $p$. Consequently, we have $B(\Q_p)=A(\Q_p)$ and the height function on $B(\Q_p)$ is trivial. Hence, the height functions $H_{\diamond,v,d}$ are only interesting at places $v$ where $X$ has bad reductions.

When $X$ is a curve or a surface, by using the Rapoport-Zink spectral sequence for $\ell$-adic cohomology (when $\ell \neq p$) and the Mokrane spectral sequence for $p$-adic cohomology, we have a concrete understanding of the height functions $H_{\diamond, v,d}$ from the geometry of the special fibers at $p$. It is expected that such an understanding also exists in higher dimensions.

\end{rem}

\subsection{Height function on $B(\R)$}
We define a height function $H_{\diamond, \infty, d}$ on $B(\R)$ as an analogue of the height functions $H_{\diamond, v,d}$ on $B(\Q_p)$ discussed in the previous section. Let $v=\infty$ be the only archimedean place of $\Q$ corresponding to the canonical embedding $\Q \hookrightarrow \R$.

We give some preparations. Let $\widehat{H}$ be a mixed $\R$-Hodge structure with a weight filtration $W$ and a Hodge filtration $F$. As explained in Section $1.7.6$ of $\cite{[Kato1]}$, there exists a unique pair $(s,\delta)$ such that
\begin{enumerate}
\item  $s$ is a splitting $s: \gr^{W} \widehat{H}_{\R} \cong \widehat{H}_{\R}$.
\item $\delta$ is a nilpotent linear map $\delta: \gr^{W} \widehat{H}_{\R} \to \gr^{W} \widehat{H}_{\R}$.
\item The filtration $F$ of $\widehat{H}$ is given by $F=s(\exp(i\delta) \gr^{W}F)$. Moreover, the Hodge $(p,q)$ components $\delta_{p,q}$ of $\delta$ is $0$ unless $p<0$ and $q<0$.
\end{enumerate}
For $d \geq 2$, Let 
\[ \widehat{H}'=\bigoplus_{\omega \in \Z} ((\gr_{\omega}^{W} \widehat{H})^{*} \otimes \gr_{\omega-d}^{W} \widehat{H})=\gr_{-d}^{W} \Hom(\widehat{H}, \widehat{H}) .\] 
Note that $\widehat{H}'$ is polarized Hodge structure of weight $-d$. Let $N_{v, d} \in \widehat{H}'_{\R}$ be the weight $(-d)$-component of the nilpotent linear map $\delta_{v}$ associated with $\widehat{H}_{\R}$.  By definiton, 
\[ N_{v,d} \in \bigoplus_{\omega \in \Z} \Hom(\gr_{\omega}^{W} \widehat{H}_{\R}, \gr_{\omega-d}^{W} \widehat{H}_{\R})=\widehat{H}' .\] 
Because $\widehat{H}'$ is a polarized Hodge structure of weight $-d$, it is possible to define
\[ l(\infty,d)=l_{v,d}(\widehat{H})=\langle N_{v,d}, N_{v,d} \rangle^{1/d} .\] 
\begin{rem}
In his paper $\cite{[Kato1]}$, K. Kato uses the notation $\delta_{v,d}$ for $N_{v,d}$.  We change his notation because we want to emphasize the analogies between height functions at archimedean and non-archimedean places. 

\end{rem}

We will give a concrete example to demonstrate the above discussion. This example is an analogue of example $\ref{concrete example}$.
\begin{expl} \label{concrete example 2}
Let $H$ be a $\R$-Hodge structure of pure weight $-d<0$. Let $\widehat{H}$ be a mixed $\R$-Hodge structure with graded quotients $\gr_{-d}^{W} \widehat{H}=H$ and $\gr_{0}^{W} \widehat{H}=\C$. By definition, $N_{v,d'}=0$ for all $d' \neq d$ and 
\[ N_{v,d} \in  \bigoplus_{\omega \in \Z} \Hom(\gr_{\omega}^{W} \widehat{H}_{\R}, \gr_{\omega-d}^{W} \widehat{H}_{\R})=\Hom(\R, H_{\R})=H_{\R}.\]

There is an exact sequence in the category of mixed $\R$-Hodge structure which is a natural analogue of the exact sequence $\ref{extension}$:
\[ 0 \to H \to \widehat{H} \to \R \to 0 \] 
in the category $\R \text{MHS}$ of mixed $\R$-Hodge structure. By definition, $[\widehat{H}]$ defines a class $[\widehat{H}]$ in $\text{Ext}_{\R \text{MHS}}^1(\R, H)$. We have a canonical isomorphism
\[ \text{Ext}^1_{\R \text{MHS}}(\R, H) \cong H_{\C}/(H_{\R}+F^{0} H_{\C}) .\] 
Moreover, if we define 
\[ H_{\R}^{\leq -1, \leq -1}:=H_{\R} \cap \bigoplus_{p, q \leq -1, p+q=-d} H_{\C}^{p,q}, \]
then we also have a canonical isomorphism 
\[ H_{\C}/(H_{\R}+F^{0} H_{\C}) \cong H_{\R}^{\leq -1, \leq -1} .\]
Therefore, we have an isomorphism 
\[ \text{Ext}^1_{\R \text{MHS}}(\R, H) \cong H_{\R}^{\leq -1, \leq -1} .\]
Under this isomorphism, the class $[\widehat{H}]$ is mapped to the $N_{v,d}$ of $\widehat{H}$. 
\end{expl}
Consequently, we can define a height function $H_{\diamond,\infty,d}: \text{Ext}^1_{\R \text{MHS}}(\R, H) \to \R_{\geq 1}$ as follow. For each $a \in \text{Ext}^1_{\R \text{MHS}}(\R, H)$, let $E_{a}$ be the corresponding mixed $\R$-Hodge structure. We define
\[ H_{\diamond, \infty,d}(a)=\exp(l_{\infty,d}(E_a)) .\] 
\begin{definition}
\noindent
\begin{enumerate}
\item We call $H_{\diamond,\infty,d}: \text{Ext}^1_{\R \text{MHS}}(\R, H) \to \R_{\geq 1}$ the height function on $\text{Ext}^1_{\R \text{MHS}}(\R, H)$.
\item Let $h_{\diamond, p,d}: \text{Ext}^1_{\R \text{MHS}}(\R, H) \to \mathbb{R}_{\geq 0}$ be defined by 
 \[ h_{\diamond, p,d}:=(\ln H_{\diamond,p,d}) .\] 
We call $h_{\diamond,p,d}$ the logarithmic height function on $\text{Ext}^1_{\R \text{MHS}}(\R, H)$.
\end{enumerate}
\end{definition} 
We remark that $h_{\diamond, \infty,d}^d$ the $d-$ power of the logarithmic height function  is given by a positive definite quadratic form $Q_{\diamond, p,d}$ on $\text{Ext}^1_{\R \text{MHS}}(\R, H)$. 
\begin{rem}
We consider a special case of the above discussion in the case $H_{\R}$ is the mixed $\R$-Hodge structure arising from the motive $M$. In this case, $H_{\R}$ is $V_{\infty}=H_{B}(M) \otimes \R$ in Bloch-Kato's paper $\cite{[BK]}$. Here $H_{B}(M)$ is the Betti realization of $M$. In this case, the above isomorphism can be written as
\[ \text{Ext}^1_{\R \text{MHS}}(\R, V_{\infty}) \cong H_{\R}^{\leq -1, \leq -1} =V_{\infty} \cap  \bigoplus_{p, q \leq -1, p+q=-d} H_{\C}^{p,q}.\]
In particular, we can define the height function $H_{\diamond, \infty,d}$ and the logarithmic height function $h_{\diamond, \infty,d}$ on $\text{Ext}^1_{\R \text{MHS}}(\R, V_{\infty})$.
\end{rem}
To define height function and logarithmic height function on $B(\R)$, we will construct a map from $B(\R)$ to $\text{Ext}^1_{\R \text{MHS}}(\R, V_{\infty})$. First, following section $2.1.10$ of $\cite{[Kato1]}$ we introduce the following notation. 
\begin{definition}
We define 
\[ E_{\infty}^{\R}=V_{\infty}^{\sigma=-1} \cap  \bigoplus_{p, q \leq -1, p+q=-d} H_{\C}^{p,q} .\]
By definition $E_{\infty}^{\R}$ is a subset of $\text{Ext}^1_{\R \text{MHS}}(\R, V_{\infty})$. 
\end{definition} 
First, we observe that there is a canonical map 
\[ p: B(\R)/B(\R)_{\text{cpt}}= D_{\infty}/(\Fil^{0} D_{\infty}+V_{\infty}^{\sigma=1}) \to E_{\infty}^{\R} .\] 
We have 
\[ D_{\infty}=(V_{\infty} \otimes \C)^{\sigma=1}=(V_{\infty}^{\sigma=1} \otimes \R) \oplus (V_{\infty}^{\sigma=-1} \otimes \R i) .\]
In particular, there is a projection map 
\[ D_{\infty} \to V_{\infty}^{\sigma=-1} \otimes \R i .\] 
By composing this map with the map $V_{\infty}^{\sigma=-1} \otimes \R i \to V_{\infty}^{\sigma=-1}$ sending $e \otimes i \to e$, we get a map $D_{\infty} \to V_{\infty}^{\sigma=-1}$. We then get a map 
\[p: D_{\infty} \to  E_{\infty}^{\R}:=( V_{\infty}^{\sigma=-1} \cap \bigoplus_{p, q \leq -1, p+q=-d} H_{\C}^{p,q}).\] 
It is easy to see that $(\Fil^{0} D_{\infty}+V_{\infty}^{+})$ belongs to the kernel of this map. Hence, $p$ induces a map 
\[ p: B(\R)/B(\R)_{\text{cpt}} \to E_{\infty}^{\R} .\]
Because $E_{\infty}^{\R}$ is a subset of $\text{Ext}^1_{\R \text{MHS}}(\R, V_{\infty})$, we can consider $p$ as a map from $B(\R)/B(\R)_{\text{cpt}}$ to $\text{Ext}^1_{\R \text{MHS}}(\R, V_{\infty})$. By composing $p$ with the projection map $B(\R) \to B(\R)/B(\R)_{\text{cpt}}$ we can define a canonical map 
\[ p: B(\R) \to \text{Ext}^1_{\R \text{MHS}}(\R, V_{\infty}) .\] 
Finally, by composing $p$ with the height function $H_{\diamond,\infty,d}$ and the logarithmic height function $h_{\diamond,\infty,d}$ on $\text{Ext}^1_{\R \text{MHS}}(\R, V_{\infty})$ we a height function and a logarithmic height function on $B(\R)$. We will still denote them by $H_{\diamond,\infty,d}$
and $h_{\diamond,\infty,d}$. 
%\begin{question}
%We will probably need to assume that $p: B(\R)/B(\R)_{\text{cpt}} \to E_{\infty}^{\R}$ an isomorphism?
%\end{question} 
By choosing a lift, we have an decomposition 
\[ B(\R)=\R^{s_{\infty}} \times B(\R)_{\text{cpt}} .\] 
By definition, $h_{\diamond,\infty,d}^{d}$ the $d$- power of the logarithmic height function on $B(\R)$ is given by a positive definite quadratic form $Q_{\diamond,\infty,d}$ on $\R^{s_{\infty}}$. Moreover, it is trivially $0$ on $B(\R)_{\cpt}$.  With this observation, the following statement is obvious. We remark that it is the archimedean analogue of Proposition $\ref{finite}$.
\begin{prop} \label{finite2}
For each positive real number $B$ 
\[ \mu(a \in B(\R)| H_{\diamond, p,d}(a) \leq B) <\infty .\] 
\end{prop}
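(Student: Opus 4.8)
The plan is to imitate the proof of Proposition~\ref{finite}, with the lattice $\Z^{s_p}$ replaced by the real vector space $\R^{s_\infty}$ and the counting measure replaced by Lebesgue measure. First I would fix the decomposition $B(\R) = \R^{s_\infty} \times B(\R)_{\text{cpt}}$ coming from the chosen lift and write a point of $B(\R)$ as a pair $(x,c)$ with $x \in \R^{s_\infty}$ and $c \in B(\R)_{\text{cpt}}$. Since $h_{\diamond,\infty,d}$, and hence $H_{\diamond,\infty,d}$, is trivial on the compact part $B(\R)_{\text{cpt}}$, the sublevel set factors as
\[ \{a \in B(\R) \mid H_{\diamond,\infty,d}(a) \leq B\} = \{x \in \R^{s_\infty} \mid H_{\diamond,\infty,d}(x) \leq B\} \times B(\R)_{\text{cpt}}. \]

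Next I would unwind the condition on the first factor. Taking logarithms, $H_{\diamond,\infty,d}(x) \leq B$ is equivalent to $h_{\diamond,\infty,d}(x) \leq \log B$; if $\log B < 0$ this set is empty because $h_{\diamond,\infty,d} \geq 0$, and if $\log B \geq 0$ it equals $\{x \in \R^{s_\infty} \mid Q_{\diamond,\infty,d}(x) \leq (\log B)^d\}$, where $Q_{\diamond,\infty,d}$ is the positive definite quadratic form described above. A positive definite quadratic form has bounded, closed sublevel sets — a solid ellipsoid — so this set has finite Lebesgue measure (explicitly it is $(\log B)^{d s_\infty/2}$ times the volume of the unit ball of $Q_{\diamond,\infty,d}$).

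Finally I would observe that $B(\R)_{\text{cpt}}$ is compact and the Tamagawa measure $\mu$ on $B(\R)$ is a Radon measure, so $\mu(B(\R)_{\text{cpt}}) < \infty$; since $\mu$ restricted to $B(\R)$ is, by construction of the measure on $A(\R)$ in \cite{[BK]} together with the product decomposition, a scalar multiple of the product of Lebesgue measure on $\R^{s_\infty}$ with the induced measure on $B(\R)_{\text{cpt}}$, the measure in question is the product of the two finite quantities above and hence finite. The only point that requires a little care is the compatibility of the global Tamagawa measure with the chosen splitting $B(\R) = \R^{s_\infty} \times B(\R)_{\text{cpt}}$ — that it is indeed, in these coordinates, a product measure with the height depending only on the $\R^{s_\infty}$-direction — but this is immediate from the definitions, so no genuine obstacle arises; the statement is, as remarked before it, essentially formal once Proposition~\ref{finite} and the quadratic-form description of $h_{\diamond,\infty,d}^{d}$ are in hand.
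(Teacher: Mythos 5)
Your proof is correct and is essentially the argument the paper intends: the paper states the proposition is ``obvious'' from exactly the same decomposition $B(\R)=\R^{s_{\infty}}\times B(\R)_{\cpt}$ and the description of $h_{\diamond,\infty,d}^{d}$ as a positive definite quadratic form on $\R^{s_{\infty}}$ vanishing on the compact factor, and you have simply written out those details (ellipsoidal sublevel sets of finite Lebesgue measure, finite measure of $B(\R)_{\cpt}$, compatibility of the measure with the splitting).
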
 
The following question seems natural. 

\begin{question}
Suppose $M=H^n(X)(r)$ where $X$ smooth projective variety over $\Q$. Can we describe $s_{\infty}$ from the geometry of $X$?
\end{question}

\subsection{Height function on adelic space $\prod_{p} B(\Q_p)$} \label{hadelic}
Having defined height functions and logarithmic height functions at all local places $B(\Q_p)$ we can now define a height function as well as logarithmic height function on the adelic space $\prod_{p} B(\Q_p)$. More precisely, we define
\[ H_{\diamond,d}: \prod_{p \leq \infty} B(\Q_p) \to \R_{\geq 1} ,\] 
by sending $a=(a_p) \in \prod_{p \leq \infty} B(\Q_p)$ to 
\[ H_{\diamond,d}(a)=\prod_{p \leq \infty} H_{\diamond,p,d}(a_p) .\] 
By Remark $\ref{interesting height functions}$, all but finitely many terms in this product are $1$ so the product makes sense. 
Similarly, we define the logarithmic height $h_{\diamond, d}=\ln H_{\diamond, d} $, for $a=(a_p)$
\[ h_{\diamond,d}(a)=\sum_{p \leq \infty} h_{\diamond,p,d}(a_p) .\] 
We can describe $h_{\diamond,d}$ concretely as follows. For $a \in \prod_{p} B(\Q_p)$ let $([a]_p)$ be the image of $a$ under the projection 
\[ \prod_{p} B(\Q_p) \to  \left(B(\R)/B(\R)_{\cpt} \right) \times \prod_{p<\infty} B(\Q_p)/A(\Q_p)=\R^{s_{\infty}} \times \prod_{p<\infty} \Z^{s_p} .\] 
Then 
\[ h_{\diamond,d}(a)=\sum_{p \leq \infty} h_{\diamond,p,d}([a]_p)= Q_{\diamond,d}([a]_{\infty})^{1/d}+\sum_{p<\infty} (\ln{p}) Q_{\diamond,p,d}([a]_p)^{1/d} .\]
With this description, the following statement is relatively straightforward. We provide a complete proof because we will use it later. 

\begin{prop} \label{finite3}
For each positive real number $B$, the set 
$$\{a \in \prod_{p \leq \infty} B(\Q_p)| H_{\diamond,d}(a) \leq B \},$$ 
is compact. In particular, 
\[ \mu(a \in \prod_{p \leq \infty} B(\Q_p)| H_{\diamond,d}(a) \leq B) <\infty .\] 
\end{prop}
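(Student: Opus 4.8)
\section*{Proof proposal for Proposition \ref{finite3}}

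The plan is to exploit the explicit topological description of the adelic space together with the fact that the height $H_{\diamond,d}$ is trivial on its compact part. Recall the topological isomorphisms $B(\R)\cong\R^{s_\infty}\times B(\R)_{\cpt}$ and, for each finite prime $p$, $B(\Q_p)\cong A(\Q_p)\times\Z^{s_p}$, where $A(\Q_p)=H^1_f(\Q_p,\widehat{\Theta})$ is a compact group (it is a closed subgroup of the finitely generated, hence compact, $\widehat{\Z}$-module $H^1(\Q_p,\widehat{\Theta})$), and where $s_p=0$ for all but finitely many $p$ by Remark \ref{interesting height functions}. Regrouping the product over all places — harmless, since the product topology is insensitive to the grouping of factors — yields a homeomorphism
\[ \prod_{p\le\infty}B(\Q_p)\;\cong\;C\times L,\qquad C:=B(\R)_{\cpt}\times\!\!\prod_{p<\infty}\!A(\Q_p),\qquad L:=\R^{s_\infty}\times\!\!\prod_{p<\infty}\!\Z^{s_p}, \]
in which $C$ is compact (a product of compact groups). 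Since $h_{\diamond,d}$ depends only on the $L$-coordinate and vanishes identically on $C$, we get $\{a:H_{\diamond,d}(a)\le B\}=C\times S_B$, where $S_B:=\{x\in L:h_{\diamond,d}(x)\le\ln B\}$, and it remains only to show that $S_B$ is compact.

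First I would check that $S_B$ is closed in $L$: each finite partial sum $\sum_{p\le P}h_{\diamond,p,d}$ is continuous on $L$ (a finite sum of functions, each depending on a single coordinate), and since every summand is nonnegative, $h_{\diamond,d}=\sup_P\sum_{p\le P}h_{\diamond,p,d}$ is lower semicontinuous, so $\{h_{\diamond,d}\le\ln B\}$ is closed. Next, using nonnegativity again in $h_{\diamond,d}(x)=Q_{\diamond,d}(x_\infty)^{1/d}+\sum_{p<\infty}(\ln p)\,Q_{\diamond,p,d}(x_p)^{1/d}$, every $x\in S_B$ satisfies $h_{\diamond,p,d}(x_p)\le\ln B$ for all $p$, so $S_B\subseteq\prod_{p\le\infty}K_p$ with $K_p:=\{x_p:h_{\diamond,p,d}(x_p)\le\ln B\}$. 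Each $K_p$ is compact: for $p=\infty$ it is the solid ellipsoid $\{x_\infty\in\R^{s_\infty}:Q_{\diamond,d}(x_\infty)\le(\ln B)^d\}$, which is closed and bounded because $Q_{\diamond,d}$ is positive definite; for $p<\infty$ it is $\{x_p\in\Z^{s_p}:Q_{\diamond,p,d}(x_p)\le(\ln B/\ln p)^d\}$, a finite set because a positive definite quadratic form has only finitely many lattice points below a fixed bound (and it equals $\{0\}$ whenever $s_p=0$, hence for all but finitely many $p$). By Tychonoff's theorem $\prod_{p\le\infty}K_p$ is compact; as $S_B$ is a closed subset of it, $S_B$ is compact, and therefore $\{a:H_{\diamond,d}(a)\le B\}=C\times S_B$ is compact.

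For the finiteness of the measure, recall that $\mu$ is the product of the local measures, each being the measure on $A(\Q_p)$ of \cite{[BK]} times counting measure on $\Z^{s_p}$ (and, at $\infty$, the measure on $B(\R)_{\cpt}$ times Lebesgue measure on $\R^{s_\infty}$). Hence $\mu(C\times S_B)\le\mu(C)\cdot\Vol(K_\infty)\cdot\prod_{p<\infty}\#K_p$, where $\mu(C)=\mu(B(\R)_{\cpt})\prod_{p<\infty}\mu(A(\Q_p))<\infty$ by the assumed convergence of the product of local Tamagawa measures, $\Vol(K_\infty)<\infty$ since $K_\infty$ is a bounded ellipsoid, and $\prod_{p<\infty}\#K_p<\infty$ since each $\#K_p$ is finite and equals $1$ for all but finitely many $p$. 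Thus $\mu(\{a:H_{\diamond,d}(a)\le B\})<\infty$.

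The only step that uses genuine structure rather than soft topology is the compactness of the one-variable sublevel sets $K_p$: this rests entirely on the positive-definiteness of the quadratic forms $Q_{\diamond,p,d}$ and $Q_{\diamond,d}$ established in Section \ref{local heights} and in the discussion of $B(\R)$, together with the finiteness of lattice points in a bounded region. Everything else — Tychonoff's theorem, lower semicontinuity of an increasing limit of continuous functions, and the triviality of the height on the compact factor $C$ — is formal, so I do not anticipate any real obstacle beyond keeping the bookkeeping of the product decomposition straight.
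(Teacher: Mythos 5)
Your argument is correct and follows essentially the same route as the paper's proof: decompose the adelic space as the compact factor $B(\R)_{\cpt}\times\prod_{p<\infty}A(\Q_p)$ times $\R^{s_\infty}\times\prod_{p<\infty}\Z^{s_p}$, note the height is trivial on the compact factor, and use positive definiteness of the quadratic forms $Q_{\diamond,p,d}$ (plus $s_p=0$ for almost all $p$) to bound the sublevel set and its measure. You simply make explicit some topological bookkeeping (closedness via lower semicontinuity, Tychonoff) that the paper leaves implicit.
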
 
\begin{proof}
Let $S_{B}$ be the above set. Then 
$$S= \{a \in \prod_{p \leq \infty} B(\Q_p)| h_{\diamond,d}(a) \leq \ln(B)\}$$ 
Under the identification $B(\Q_p)=A(\Q_p) \times \Z^{s_p}$ and $B(\R)=B(\R)/B(\R)_{\cpt} \times B(\R)_{\cpt}$, it is the same as the following set 
\[ \left \{(a_{\infty} \times B(\R)_{\cpt} ) \times \prod_{p <\infty} ([a]_p \times A(\Q_p)) \right \}, \] 
where 
\[ Q_{\diamond,d}([a]_{\infty})^{1/d}+\sum_{p<\infty} (\ln{p}) Q_{\diamond,p,d}([a]_p)^{1/d} \leq \ln{B}   .\] 
For each $p$, $Q_{\diamond,p,d}$ is a positive definite quadratic form, there are only finitely many $[a]_p \in \Z^{s_p}$ verifying the above inequality. We also know that $A(\Q_p)$ is compact for all $p<\infty$. Therefore, $S$ is compact. Moreover, if we equip $\R^{s_{\infty}} \times \prod_{p<\infty} {\Z}^{s_p}$ with the product measure, then it is easy to see that \[ \mu(S)=\mu(B(\R)_{\cpt}) \times \prod_{p< \infty} \mu(A(\Q_p)) \times \mu (T_{B}), \] 
where 

\[  T_{B}= \left\{([a]_p) \in \R^{s_{\infty}} \times \prod_{p<\infty} {\Z}^{s_p} \big| Q_{\diamond,d}([a]_{\infty})^{1/d}+\sum_{p<\infty} (\ln{p}) Q_{\diamond,p,d}([a]_p)^{1/d} \leq \ln{B} \right\} .\] 
\end{proof}

\subsection{Height function on $B(\Q)$} \label{adelic}
Having defined height functions on local spaces $B(\Q_p)$, we now define a height function on the global space $B(\Q)$. First, we recall that $B(\Q)$ is defined by the following fiber product
 diagram
\[\xymatrix@C+1pc{
       &B(\Q) \ar[d] \ar[r] &H_{g}^1(\Q,\widehat{\Theta})  \ar[d]  \\
       &\Psi \ar[r] &H_{g}^1(\Q, \widehat{\Theta} \otimes \textbf{A}_{\Q}^{f})   \\
 }\]
We also recall that $A(\Q)$ is defined to be the fiber product
 diagram
\[\xymatrix@C+1pc{
       &A(\Q) \ar[d] \ar[r] &H_{f}^1(\Q,\widehat{\Theta})  \ar[d]  \\
       &\Psi \ar[r] &H_{g}^1(\Q, \widehat{\Theta} \otimes \textbf{A}_{\Q}^{f})   \\
 }\]
\begin{rem}
Philosophically, we think of $\Psi$ as $\text{Ext}^{1}_{\mathcal{M M}}(1, M)$ where $\mathcal{M M}$ is the category of mixed motives with $\Q$-coefficients. By definition, an element of $B(\Q)$ is of the form $(a,b)$ where $a \in \Psi$ and $b \in H_{g}^1(\Q,\widehat{\Theta})$ such that the images of $a$ and $b$ in $H_{g}^1(\Q, \widehat{\Theta} \otimes \textbf{A}_{\Q}^{f})$ coincide. By definition, $a$ corresponds to a mixed motives $M_a$ that fits into a short exact sequence 
\[ 0 \to M \to M_a \to \Q \to 0 .\] 
Similarly $b$ corresponds to an extension of $\widehat{\Z}$-modules with a continuous $\Gal(\overline{\Q}/\Q)$-action 
\[ 0 \to \widehat{\Theta} \to \widehat{\Theta}_{b} \to \widehat{\Z} \to 0.\] 
The compatibility of $a$ and $b$ means that $(M_a, \widehat{\Theta}_{b})$ is a motive with $\Z$-coefficients. Therefore, we should think about $B(\Q)$ as the extension group $\Ext^1(\Z, (M, \Theta))$ in the category of motives with $\Z$-coefficients. Similarly, we should think about $A(\Q)$ the extension group $\Ext^1(\Z, (M, \Theta))$ in the category of motives with $\Z$-coefficients with local conditions at non-archimedean places.
\end{rem} 
In what follows, we will use the following convention. If $M$ is a finitely generated abelian group, then $M_{\tor}$ is the torsion subgroup of $M$ and $M_{\text{free}}=M/M_{\tor}$. By definition, $M_{\text{free}}$ is a free $\Z$-module of finite rank.

By definition, $A(\Q) \subset B(\Q)$ and both are finitely generated abelian groups. Moreover we have  $$A(\Q) \otimes \widehat{\Z}=H^1_{f}(\Q, \widehat{\Theta}), A(\Q) \otimes \Q=\Phi,  B(\Q) \otimes \widehat{\Z}=H^1_{g}(\Q, \widehat{\Theta}), B(\Q) \otimes \Q=\Psi.$$
We have a global analogue of Lemma $\ref{Hg/Hf}$ that $B(\Q)/A(\Q)$ is a free abelian group of finite rank. Conjectures of Beilinson, Bloch, Kato,  and Jannsen (see $\cite{[Beilinson]}$, $\cite{[BK]}$,  and $\cite{[Jannsen2]}$) predict that 
\begin{enumerate}
\item The regulator map induces an isomorphism $$A(\Q) \otimes \R = \Phi \otimes \R \cong B(\R)/B(\R)_{\text{cpt}}= E_{\infty}^{\R}.$$
In particular, there is a map $A(\Q) \to B(\R)/B(\R)_{\cpt}$. One can lift this map to be a map $h: A(\Q) \to B(\R)$. In particular, the image of $A(\Q)_{\tor}$ belongs to $B(\R)_{\cpt}$. It is easy to see that we can choose a splitting $B(\R)=B(\R)_{\cpt} \times B(\R)/B(\R)_{\cpt}$ and $A(\Q)=A(\Q)_{\text{free}} \times A(\Q)_{\tor}$ in such a way that $h=(h_1,h_2)$ where $h_1: A(\Q)_{\text{free}} \to B(\R)/B(\R)_{\cpt}$ and $h_2: A(\Q)_{\tor} \to B(\R)_{\cpt}$. The above isomorphism implies that $h_1$ is an embedding and it identifies $A(\Q)_{\text{free}}$ with a lattice in $B(\R)/B(\R)_{\cpt}$.

\item The canonical projection map 
\[B(\Q)/A(\Q) \to \prod_{p<\infty} B(\Q_p)/A(\Q_p), \]
 is an isomorphism.   
\end{enumerate}  
As a corollary, we conclude that $\rank_{\Z} A(\Q) = \rank_{\Z} \Phi=B(\R)/B(\R)_{\text{cpt}}=s_{\infty}$ and $s_{\text{fin}}=\rank(B(\Q)/A(\Q))=\sum_{p<\infty} s_p$. Moreover, it is easy to we can define a splitting (up to a choice of basis) 
\[ B(\Q)=A(\Q) \times \Z^{s_{\text{fin}}}, \] 
in such a way that $\Z^{s_{\text{fin}}}$ belongs to the kernel of the regulator map $R_{\infty}: \Psi \otimes \R \to B(\R)/B(\R)_{\text{cpt}}$. We then have 
\[ \Z^{s_{\fin}} \cong \prod_{p<\infty} B(\Q_p)/A(\Q_p) =\prod_{p <\infty} \Z^{s_p}. \]

We define a height function and a logarithmic height function on $B(\Q)$. As explained above, there is an evident map
\[ B(\Q) \to \prod_{p \leq \infty} B(\Q_p) .\] 
By composing this map with the height function and the logarithmic height functions defined in section $\ref{adelic}$, we can define a height function and a logarithmic height function on $B(\Q)$ as follows
\begin{definition}
$H_{\diamond,d}: B(\Q) \to \R_{\geq 1}$ is defined by 
\[ H(a)=\prod_{p \leq \infty} H_{\diamond,p,d} (a_p) .\] 
Similarly, $h_{\diamond,d}: B(\Q) \to \R_{\geq 0}$ is defined to be 
\[ h_{\diamond,d}=\ln(H(a))=\sum_{ p \leq \infty} h_{\diamond,p,d}(a_p) =Q_{\diamond,\infty,d}(a_{\infty})^{1/d}+ \sum_{p<\infty} (\ln{p}) Q_{\diamond,p,d}(a_p)^{1/d} .\]
\end{definition}
As explained in Section $\ref{hadelic}$,  we can describe logarithmic height function $h_{\diamond,p,d}$ concretely as follows. For $a \in B(\Q)$ we write $a=\sum_{p} [a]_{p}$ under the splitting: $B(\Q)=A(\Q) \times \prod_{p} \Z^{s_p}$, then $([a]_{\infty})_{p} \in A(\Q_p)$ for all $p<\infty$, hence $h_{\diamond,p,d}(b_{\infty})=0$. Similarly, if $p<\infty$, then $R_{\infty}([a]_{p})=0$ in $B(\R)/B(\R)_{\text{cpt}}$, hence $h_{\diamond,\infty,d}([a]_{p})=0$. From this observation, we conclude that 
\[ h_{\diamond,d}(a)=\sum_{p \leq \infty} h_{\diamond,p,d}([a]_p) .\] 
We also note that $B(\Q)_{\tor}=A(\Q)_{\tor}$ is a finite group, and if $b \in B(\Q)_{\tor}$ then $h_{\diamond,d}(a)=0$.  Note that there is a splitting  $A(\Q)=A(\Q)_{\tor} \times A(\Q)_{\free}$. Hence, there is also a splitting 
\[ B(\Q)= B(\Q)_{\text{free}} \times B(\Q)_{\tor}= A(\Q)_{\tor} \times A(\Q)_{\text{free}} \times \prod_{p<\infty} \Z^{s_p}=A(\Q)_{\tor} \times \prod_{p \leq \infty} \Z^{s_p} .\]
With this observation, the following statement is a direct corollary of $\ref{finite3}$. 

\begin{prop} \label{finite4}
For each positive integer $B$, the set 
\[ \{b \in B(\Q)|H_{\diamond,d}(b)<B \} ,\]
is finite.
\end{prop}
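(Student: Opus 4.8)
The plan is to deduce this from the compactness statement of Proposition \ref{finite3}, together with the fact that the image of $B(\Q)$ in the relevant adelic quotient is discrete. The first reduction is to the free part: since $B(\Q)_{\tor}=A(\Q)_{\tor}$ is a finite group and both $H_{\diamond,d}$ and $h_{\diamond,d}$ are trivial on torsion, the splitting $B(\Q)=B(\Q)_{\tor}\times B(\Q)_{\free}$ exhibits $\{b\in B(\Q)\mid H_{\diamond,d}(b)<B\}$ as a disjoint union of $\#B(\Q)_{\tor}$ copies of $\{b\in B(\Q)_{\free}\mid h_{\diamond,d}(b)\le\ln B\}$. Hence it is enough to show this last set is finite.

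Next I would use the splitting $B(\Q)_{\free}=A(\Q)_{\free}\times\prod_{p<\infty}\Z^{s_p}$ and the concrete formula $h_{\diamond,d}(b)=Q_{\diamond,\infty,d}([b]_{\infty})^{1/d}+\sum_{p<\infty}(\ln p)\,Q_{\diamond,p,d}([b]_p)^{1/d}$. By Remark \ref{interesting height functions}, $s_p=0$ at every prime of good reduction, so $\prod_{p<\infty}\Z^{s_p}$ is a finitely generated free group, which we identify with $\Z^{s_{\fin}}$. The conjectures of Beilinson, Bloch, Kato and Jannsen assumed throughout (see $\cite{[Beilinson]}$, $\cite{[BK]}$, $\cite{[Jannsen2]}$) say that the regulator identifies $A(\Q)_{\free}$ with a lattice $\Lambda$ in $\R^{s_{\infty}}=B(\R)/B(\R)_{\cpt}$ and that $\Z^{s_{\fin}}$ maps isomorphically onto $\prod_{p<\infty}B(\Q_p)/A(\Q_p)$. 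Therefore the natural map $\iota\colon B(\Q)_{\free}\to\R^{s_{\infty}}\times\Z^{s_{\fin}}$ is injective with image the discrete subgroup $L:=\Lambda\times\Z^{s_{\fin}}$, and $h_{\diamond,d}$ factors through $\iota$ and the function $\phi(x)=Q_{\diamond,\infty,d}(x_{\infty})^{1/d}+\sum_{p<\infty}(\ln p)\,Q_{\diamond,p,d}(x_p)^{1/d}$ on $\R^{s_{\infty}}\times\Z^{s_{\fin}}$.

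Finally I would invoke Proposition \ref{finite3}. The set $T_B=\{x\mid\phi(x)\le\ln B\}$ is precisely the set denoted $T_B$ in the proof of that proposition; since each quadratic form $Q_{\diamond,p,d}$ is positive definite, only finitely many values of $(x_p)_{p<\infty}\in\prod_{p<\infty}\Z^{s_p}$ satisfy the bound, and for each of them $x_{\infty}$ is confined to a closed (possibly empty) ellipsoid, so $T_B$ is a finite union of compact sets and hence compact. A discrete subgroup of a locally compact group meets any compact set in a finite set, so $L\cap T_B$ is finite; and $\{b\in B(\Q)_{\free}\mid h_{\diamond,d}(b)\le\ln B\}=\iota^{-1}(L\cap T_B)$ is therefore finite. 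Combined with the first reduction, this proves Proposition \ref{finite4}.

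The one place where genuine input is needed is the discreteness of $L$: a finitely generated subgroup of $\R^{s_{\infty}}$ need not be discrete, so the lattice property of $A(\Q)_{\free}$ inside $B(\R)/B(\R)_{\cpt}$ — that is, the conjectural non-degeneracy of the regulator map — is essential here. Granting that, as we do throughout, the remaining steps are purely formal consequences of Proposition \ref{finite3} and of the positive-definiteness of the quadratic forms $Q_{\diamond,p,d}$.
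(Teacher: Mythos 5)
Your proof is correct and follows essentially the same route as the paper: split off the finite torsion part, identify $B(\Q)_{\free}$ with the product of the lattice $A(\Q)_{\free}\subset B(\R)/B(\R)_{\cpt}$ (via the conjectural regulator) and $\prod_{p<\infty}\Z^{s_p}$, and count points bounded by the positive-definite quadratic forms $Q_{\diamond,p,d}$. The paper simply declares the resulting set of lattice points ``evidently finite,'' while you justify this via compactness of $T_B$ from Proposition \ref{finite3} and discreteness of the image of $B(\Q)_{\free}$ --- a more explicit spelling-out of the same argument, correctly identifying the regulator non-degeneracy as the essential input.
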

\begin{proof}
We have 
\[ \{b \in B(\Q)|H_{\diamond,d}(b)<B \}= B(\Q)_{\tor} \times  \{b \in B(\Q)_{\free}|H_{\diamond,d}(b)<B \} .\]
Under the above identification, the second component can be described as 
\[ \{([a]_p) \in \prod_{p \leq \infty} \Z^{s_p} |Q_{\diamond,\infty,d}(a_{\infty})^{1/d}+ \sum_{p<\infty} (\ln{p}) Q_{\diamond,p,d}(a_p)^{1/d} \leq \ln{B} \}. \]
This is evidently a finite set. Hence $\{b \in B(\Q)|H_{\diamond,d}(b)<B \}$ is a finite set as well.

\end{proof}

\section{Proof of main theorem} \label{proof1}
We give a proof for Theorem $\ref{theorem1}$. First, we recall the definition of $\Tam(M)$ as defined in $\cite{[BK]}$
\[ \Tam(M)=\mu \left(\prod_{p} A(\Q_p)/A(\Q) \right) .\] 
Because $A(\Q_p)$ and $A(\R)_{\cpt}$ is compact for all $p<\infty$, this can be rewritten as 
\[ \Tam(M)= |A(\Q)_{\tor}|^{-1} \times \mu((B(\R)/B(\R)_{\cpt})/A(\Q)_{\free})\times \mu(A(\R)_{\cpt}) \times \prod_{p} \mu( A(\Q_p) .\] 

We know from previous discussions that the set $\{x \in B(\Q)| H_{\diamond,d}(x) \leq B \}$ is finite and $ \mu(a \in \prod_{p} B(\Q_p)| H_{\diamond,d}(a) \leq B) <\infty$. Hence, the quotient  
\[ \frac{ \# \{a \in B(\Q)| H_{\diamond,d}(b) \leq B \}}{\mu(a \in \prod_{p} B(\Q_p)| H_{\diamond,d}(a) \leq B)} =\frac{C_1}{C_2}\]
makes sense. Here 
\[ C_1= \# \{a \in B(\Q)| H_{\diamond,d}(b) \leq B \} ,\]
and 
\[ C_2= \mu(a \in \prod_{p} B(\Q_p)| H_{\diamond,d}(a) \leq B) .\] 
By the discussions around Proposition $\ref{finite3}$ and Proposition $\ref{finite4}$ we have 
\[ C_1=|A(\Q)_{\tor}| \times N_{B}, \]
where 
\[
N_{B}=\# \{([a]_p) \in \prod_{p \leq \infty} \Z^{s_p} |Q_{\diamond,\infty,d}(a_{\infty})^{1/d}+ \sum_{p<\infty} (\ln{p}) Q_{\diamond,p,d}(a_p)^{1/d} \leq \ln{B} \} .\] 
We also have  
\[ C_2= \mu(B(\R)_{\cpt}) \times \prod_{p< \infty} \mu(A(\Q_p)) \times \mu (T_{B}), \]
 where 
 \[  T_{B}= \left\{([a]_p) \in \R^{s_{\infty}} \times \prod_{p<\infty} {\Z}^{s_p} \big| Q_{\diamond,\infty, d}([a]_{\infty})^{1/d}+\sum_{p<\infty} (\ln{p}) Q_{\diamond,p,d}([a]_p)^{1/d} \leq \ln{B} \right\} .\] 
In summary, we have
\[ \dfrac{C_1}{C_2}=\dfrac{\mu((B(\R)/B(\R)_{\cpt})/A(\Q)_{\free}) \times N_{B}}{\Tam(M) \times \mu(T_{B})}. \] 
Therefore, Theorem $\ref{theorem1}$ is equivalent to 
\[ \lim_{B \to \infty} \dfrac{N_{B}}{\mu(T_B)}=(\mu((B(\R)/B(\R)_{\cpt})/A(\Q)_{\free}))^{-1} .\] 
Once we fix a basis and rescale the measure, we can assume that $A(\Q)_{\free}$ is the standard basis for $B(\R)/B(\R)_{\cpt}$ and that $\mu(B(\R)/B(\R)_{\cpt}/ A(\Q)_{\free})=1$. The above statement then becomes $\lim_{B \to \infty} \dfrac{N_{B}}{\mu(T_B)}=1$, which is a consequence of the following general statement.

\begin{prop}
Let $f=(f_1,f_2):\R^{r_1+r_2}=\R^{r_1} \times \R^{r_2} \to \R$. Suppose for each $i=1,2$ the following conditions are satisfied
\begin{enumerate}
\item $f_i(x) \geq 0$ for all $x$. Moreover, $f_i(x)=0$ if and only if $x=0$. 
\item $f_i(x)$ is continuous.
\item There exists a positive real number $c$ such that $f(\lambda x)=|\lambda|^{c} f(x)$ for all $x \in \R^{r_i}$ and $\lambda \in \R$. 
\end{enumerate}
We equip $\Z^{r_1}$ with the counting measure $d \mu_{0}$ and $\R^{r_1}, \R^{r_2}$ the usual Lebesgue measures $d \mu_{1}, d \mu_{2}$. Let  
\[ I(B)=((n,y) \in \Z^{r_1} \times \R^{r_2}| f(n,y)=f_1(n)+f_2(y) \leq B ) ,\] 
and 
\[ V(B)=((x,y) \in \R^{r_1} \times \R^{r_2}| f_1(x)+f_2(y) \leq B). \] 
Then 
\[ \lim_{B \to \infty} \dfrac{\mu(I(B)}{\mu(V(B))}=1 .\] 
\end{prop}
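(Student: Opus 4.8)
The plan is to reduce the statement to a classical Riemann-sum asymptotic by two elementary manipulations: first integrating out the $\R^{r_2}$-variable using the homogeneity of $f_2$, and then rescaling by $B^{1/c}$ so that the remaining lattice-point count becomes a Riemann sum for a single $B$-independent, compactly supported function. To begin, note that each $f_i$ is continuous, positive away from $0$, and homogeneous of degree $c>0$, so with $m_i=\min_{|u|=1}f_i(u)>0$ we get $f_i(x)\ge m_i|x|^c$ for all $x$; hence every sublevel set $\{f_i\le t\}$ is bounded, and being closed it is compact. In particular $\kappa:=\mu_2(\{f_2\le 1\})$ is finite and positive, $\mu(V(B))<\infty$, and homogeneity gives $\{f_i\le t\}=t^{1/c}\{f_i\le 1\}$ for $t>0$, so $\mu_2(\{f_2\le t\})=\kappa\,t^{r_2/c}$ for every $t\ge 0$.

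Slicing $I(B)$ and $V(B)$ in the $\R^{r_2}$-variable and applying this last identity, one obtains
\[ \mu(I(B))=\kappa\sum_{n\in\Z^{r_1}}\big(B-f_1(n)\big)_+^{r_2/c},\qquad \mu(V(B))=\kappa\int_{\R^{r_1}}\big(B-f_1(x)\big)_+^{r_2/c}\,dx, \]
where $(t)_+=\max(t,0)$ (terms with $f_1(n)>B$ vanish because $f_2\ge 0$). So the ratio in question equals $S(B)/\widetilde V(B)$, where $S(B)=\sum_{n\in\Z^{r_1}}g_B(n)$, $\widetilde V(B)=\int_{\R^{r_1}}g_B$, and $g_B(x)=(B-f_1(x))_+^{r_2/c}$.

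Now substitute $x=B^{1/c}u$; homogeneity of $f_1$ gives $g_B(B^{1/c}u)=B^{r_2/c}\phi(u)$ with $\phi(u)=(1-f_1(u))_+^{r_2/c}$ a fixed nonnegative function on $\R^{r_1}$, continuous and supported on the compact set $\{f_1\le 1\}$. Hence $\widetilde V(B)=W\,B^{(r_1+r_2)/c}$ with $W=\int_{\R^{r_1}}\phi>0$, while $S(B)=B^{r_2/c}\sum_{n\in\Z^{r_1}}\phi(B^{-1/c}n)$. Writing $h=B^{-1/c}\to 0$, the classical fact that $h^{r_1}\sum_{n\in\Z^{r_1}}\phi(hn)\to\int_{\R^{r_1}}\phi$ for continuous compactly supported $\phi$ then gives $S(B)=B^{(r_1+r_2)/c}(W+o(1))=\widetilde V(B)(1+o(1))$, which is the claim. (The degenerate case $r_1=0$ is trivial since $I(B)=V(B)$; when $r_2=0$ the same conclusion is the standard count of lattice points in the dilation $B^{1/c}\{f_1\le 1\}$ of a compact set whose boundary lies in the radial graph $\{u:\ |u|^{c}f_1(u/|u|)=1\}$ and so has Lebesgue measure zero.)

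I expect the only delicate point to be the uniformity in the final Riemann-sum step. A naive cube-by-cube comparison of $S(B)$ with $\widetilde V(B)$ carried out before rescaling would involve the modulus of continuity of $g_B$ over a region of diameter $\asymp B^{1/c}$, and that estimate degrades as $B\to\infty$; the essential move is therefore the rescaling to the $B$-independent function $\phi$, after which the error is uniformly $o(B^{r_1/c})$. The other ingredient worth isolating is the compactness of the sublevel sets established at the outset, which uses all three hypotheses on the $f_i$ and is precisely what makes $\phi$ compactly supported.
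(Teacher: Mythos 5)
Your proof is correct and follows essentially the same route as the paper's: slice off the $\R^{r_2}$-variable by Fubini, rescale by $B^{1/c}$, and identify the lattice sum as a Riemann sum of the slice-volume function (your $\kappa\phi$ is exactly the paper's $g$, which you compute explicitly via the homogeneity of $f_2$). Your added remarks — compactness of the sublevel sets, continuity and compact support of $\phi$ justifying the Riemann-sum limit, and the degenerate cases $r_1=0$, $r_2=0$ — only sharpen the same argument.
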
 
\begin{proof}
By property $(iii)$ of $f_1,f_2$, $f(x,y) \leq B $ is equivalent to 
\[ f_1 \left(\frac{x}{B^{1/c}} \right) +f_{2}\left(\frac{y}{B^{1/c}} \right) \leq 1 .\] 
From this observation 
\[ \mu(V(B))=(B^{1/c})^{r_1+r_2} \mu(S_1) ,\]
where 
\[S_1=((x,y) \in \R^{r_1} \times \R^{r_2}| f_1(x)+f_2(y) \leq 1) .\] 
Let $D=\{x \in \R^{r_1}| f_1(x)  \leq 1 \}$. For each $x$ in $D$, we define 
\[ g(x)=\int_{f_2(y) \leq 1-f(x)} 1 d \mu_2 .\] 
By our assumption, $g(x)$ is an integrable function on $D$. We have 
\[ \mu (S_1)=\int_{\R^{r_1+r_2}} 1_{S_1} .\]
By Fubini's theorem, we have 
\[ \mu(S_1)=\int_{D} g(x) d \mu_{1} .\]
We also have 
Similarly, 
\[ I(B)=\{(n,y) \in \Z^{r_1} \times \R^{r_{2}}| f_1 \left(\frac{n}{B^{1/c}} \right) +f_{2}\left(\frac{y}{B^{1/c}} \right) \leq 1 \} .\] 
By a similar argument as above we have 
\begin{align*}
\mu(I(B)) &=(B^{1/c})^{r_2} \times \sum_{f_1(n) \leq B } \left( \int_{f_{2}(y) \leq 1-f_1(\frac{n}{B^{1/c}})} 1 d \mu_{2} \right) \\
          &=(B^{1/c})^{r_2} \times \sum_{a \in \frac{1}{B^{1/c}} \Z^{r_1} \cap D} g(a) \\
          &=(B^{1/c})^{r_1+r_2} \times \sum_{a \in \frac{1}{B^{1/c}} \Z^{r_1} \cap D} g(a) \left(\frac{1}{B^{1/c}} \right)^{r_1} .
\end{align*}
We observe that second term is nothing but the Riemann sum of $g$ with respect to the partition $\frac{1}{B^{1/c}} \Z^{r_1} \cap D$. Since $g(x)$ is integrable on $D$, we have 
\[ \lim_{B \to \infty} \left[ \sum_{a \in \frac{1}{B^{1/c}} \Z^{r_1} \cap D} g(a) \left(\frac{1}{B^{1/c}} \right)^{r_1} \right] =\mu(S_1) .\] 
The above statement follows immediately from this. 

\end{proof}

\subsection{Some lemmas on counting integer points}
We discuss some simple lemmas about counting integer points in bounded domains in $\R^2$.

\begin{lem}(Euler's summation formula, see \cite[Theorem 3.1]{[Apostol]}) \label{lem: integer}
Let $f$ be a continuously differentiable function on $[y,x]$. Then 
\[ \sum_{y<n \leq x} f(n) = \int_{y}^{x} f(u)dt+ \int_{y}^{x} \{u \} f'(u)du +f(x)\{x \}-f(y) \{y \}.\]
Here $\{x \}=x-[x]$ is the fractional part of $x$.
\end{lem}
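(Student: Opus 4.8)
The plan is to prove this by Riemann--Stieltjes integration, which is the ``continuous'' form of Abel summation. First I would observe that, writing $[u]$ for the integer part of $u$, one has $\sum_{y<n\le x} f(n) = \int_y^x f(u)\,d[u]$, since $[u]$ is a pure jump function with unit jumps at exactly the integers in $(y,x]$. Here one must be mildly careful with the conventions at $y$ and $x$ so that $n=y$ is excluded and $n=x$ is included when these are integers; reassuringly, the terms $f(x)\{x\}$ and $f(y)\{y\}$ in the final formula vanish in precisely those cases, which serves as a consistency check.

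Next, writing $[u] = u - \{u\}$, so that $d[u] = du - d\{u\}$, gives
\[ \sum_{y<n\le x} f(n) = \int_y^x f(u)\,du - \int_y^x f(u)\,d\{u\}. \]
For the second integral I would apply Riemann--Stieltjes integration by parts,
\[ \int_y^x f(u)\,d\{u\} = \bigl[\, f(u)\{u\}\, \bigr]_y^x - \int_y^x \{u\}\,f'(u)\,du, \]
which is legitimate since $f$ is $C^1$ on $[y,x]$ (the product rule for Riemann--Stieltjes integrals applies when one factor is of bounded variation and the other continuous). Substituting and rearranging yields the claimed identity, up to matching the sign convention on the endpoint terms to Apostol's normalization.

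An alternative, fully elementary route avoids Riemann--Stieltjes: partition $[y,x]$ at the integers $y \le \lceil y\rceil < \cdots < \lfloor x\rfloor \le x$, apply ordinary integration by parts to $\int_n^{n+1}\{u\}f'(u)\,du$ on each subinterval (where $\{u\} = u-n$), and telescope the boundary terms. The only delicate points are the two extreme subintervals, whose outer endpoints are $y$ and $x$ rather than consecutive integers; tracking those contributions is exactly what produces $f(x)\{x\}$ and $f(y)\{y\}$. I expect the only real obstacle to be this bookkeeping --- justifying the step-function integration by parts and, above all, pinning down the endpoint terms and their signs, since the statement is sensitive to the half-open convention $(y,x]$ and to the choice of $\{u\}$ versus $-\{u\}$. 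As the result is classical, in the write-up I would simply invoke \cite[Theorem 3.1]{[Apostol]}.
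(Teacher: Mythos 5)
The paper offers no proof of this lemma at all: it is quoted as a classical result with the reference to \cite[Theorem 3.1]{[Apostol]} doing all the work, so there is nothing internal to compare your argument against. Your Riemann--Stieltjes argument (write $\sum_{y<n\le x} f(n)=\int_y^x f(u)\,d[u]$, split $[u]=u-\{u\}$, integrate by parts) is exactly the standard proof, and the elementary variant by partitioning at the integers is also fine; either would be a complete justification, and ending by citing Apostol matches what the paper does.

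One point you should settle rather than hedge: your own derivation gives
\[ \sum_{y<n\le x} f(n)=\int_y^x f(u)\,du+\int_y^x \{u\}\,f'(u)\,du-f(x)\{x\}+f(y)\{y\}, \]
i.e.\ the endpoint terms carry the signs $f(x)([x]-x)-f(y)([y]-y)$ as in Apostol's Theorem 3.1. The lemma as printed in the paper has these two boundary terms with the opposite signs (and a stray $dt$ in the first integral), so your ``up to matching the sign convention'' caveat is in fact pointing at a typo in the statement, not at an ambiguity in your proof. This is harmless for the paper's use of the lemma (in Lemma~\ref{lem: counting} the boundary terms vanish, since $f$ vanishes at the upper limit and $\{0\}=0$, and only the asymptotics matter), but in a careful write-up you should state the formula with the correct signs rather than leave the normalization open.
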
 

We will use the following lemma repeatedly. 
\begin{lem} \label{lem: counting}
Let $s,t$ be positive integers. For each positive real number $X$, define
 \[ d(X)= \# \{(m,n) \in \N \times \N| a m^{1/s}+b n^{1/t} \leq X \} . \] 
Then 
\[ d(X) \sim \dfrac{1}{ {s+t\choose t}} \dfrac{1}{a^s b^t} X^{s+t} .\] 
\end{lem}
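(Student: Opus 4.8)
Looking at Lemma~\ref{lem: counting}, the goal is to count lattice points $(m,n)\in\N\times\N$ in the region $am^{1/s}+bn^{1/t}\leq X$. The natural strategy is to reduce the two-dimensional count to an iterated one-dimensional count, using Euler's summation formula (Lemma~\ref{lem: integer}) to replace sums by integrals at each stage, and to keep track of error terms.

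\textbf{Plan of proof.} First I would fix $m$ and count the admissible $n$: for each $m$ with $am^{1/s}\leq X$, the condition becomes $bn^{1/t}\leq X-am^{1/s}$, i.e. $n\leq b^{-t}(X-am^{1/s})^t$. The number of such positive integers $n$ is $b^{-t}(X-am^{1/s})^t+O((X-am^{1/s})^{t-1})$, valid when $X-am^{1/s}\geq 0$ (and $0$ otherwise). Summing over $m$ from $1$ to roughly $(X/a)^s$ gives
\[ d(X)=\frac{1}{b^t}\sum_{1\leq m\leq (X/a)^s}(X-am^{1/s})^t + O\!\left(\sum_{1\leq m\leq (X/a)^s}(X-am^{1/s})^{t-1}\right). \]
Next I would apply Euler's summation formula (Lemma~\ref{lem: integer}) to the main sum $\sum_{m}(X-am^{1/s})^t$ with $f(u)=(X-au^{1/s})^t$ on $[0,(X/a)^s]$. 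The leading term is $\int_0^{(X/a)^s}(X-au^{1/s})^t\,du$, and the correction terms involving $\{u\}f'(u)$ and the boundary values $f(x)\{x\}-f(y)\{y\}$ are of strictly lower order in $X$ (since $f$ and $f'$ are bounded by powers of $X$ over an interval of length $O(X^s)$, but differentiating in $u$ costs a factor of $X^{-1/s}$). Similarly the $O$-term above is $O(X^{s+t-1})$, hence negligible compared to the expected main term $X^{s+t}$.

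\textbf{Evaluating the main integral.} I would compute $\int_0^{(X/a)^s}(X-au^{1/s})^t\,du$ by the substitution $u=(X/a)^s v^s$ (so $v$ ranges over $[0,1]$ and $au^{1/s}=Xv$), giving $du=s(X/a)^s v^{s-1}dv$ and
\[ \int_0^{(X/a)^s}(X-au^{1/s})^t\,du = s\left(\frac{X}{a}\right)^s X^t\int_0^1 v^{s-1}(1-v)^t\,dv = \frac{s\,X^{s+t}}{a^s}\,B(s,t+1), \]
where $B$ is the Beta function. Since $B(s,t+1)=\frac{(s-1)!\,t!}{(s+t)!}$, we get $s\,B(s,t+1)=\frac{s!\,t!}{(s+t)!}=\binom{s+t}{t}^{-1}$. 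Dividing by $b^t$ yields $d(X)\sim\binom{s+t}{t}^{-1}a^{-s}b^{-t}X^{s+t}$, as claimed.

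\textbf{Main obstacle.} The only real care needed is the bookkeeping of error terms: one must verify that the inner-count error $O((X-am^{1/s})^{t-1})$, once summed over $m\leq(X/a)^s$, contributes only $O(X^{s+t-1})$, and that all the non-integral terms produced by Euler's summation are likewise $o(X^{s+t})$. This is routine — each differentiation in $u$ lowers the $X$-degree, and the interval of summation has length $\asymp X^s$ — but it is the step where one could slip. Everything else is a standard Beta-integral computation.
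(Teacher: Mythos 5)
Your proof is correct and follows essentially the same route as the paper's: reduce to a one-variable sum (you fix $m$ and count $n$, the paper fixes $n$ and counts $m$, which is symmetric), control the discrepancy via Euler's summation formula, and evaluate the resulting integral as a Beta function to produce the factor $\binom{s+t}{t}^{-1}a^{-s}b^{-t}$. The only difference is your slightly more explicit bookkeeping of the $O$-terms, which does not change the argument.
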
 
\begin{proof}
We see that $a m^{1/s}+b n^{1/t} \leq X$ is equivalent to 
\[ m \leq \left(\dfrac{X-bn^{1/t}}{a} \right)^s .\] 
Therefore we conclude that $d(X) \sim d_1(X)$ where
\[ d_1(X)= \sum_{0<n \leq (X/b)^t}  \left(\dfrac{X-bn^{1/t}}{a} \right)^s .\]
Let $f(u)=\left(\dfrac{X-bu^{1/t}}{a} \right)^s$, then by Lemma $\ref{lem: integer}$ we have 
\[ d_1(X)= \int_{0}^{(X/b)^t} f(u)du- \int_{0}^{(X/b)^t} (u-[u])f'(u)du .\]
Because $0\leq u-[u]<1$ we have 
\[ \left| \int_{0}^{(X/b)^t} (u-[u])f'(u)du \right| \leq \int_{0}^{(X/b)^t} |f'(u)|du=\left(\frac{X}{a} \right)^s .\]
We will estimate the first term 
\[ \int_{0}^{(X/b)^t} \left(\dfrac{X-bu^{1/t}}{a} \right)^s du .\] 
By changing variable $u=(X/b)^t v^t$, we see that this integral is equal to 
\[ \frac{X^{s+t}}{a^s b^t} \int_{0}^{1} t (1-v)^s v^{t-1} dv .\] 
The above integral is a Gamma function; and by checking the table of values we conclude that  
\[ d(X) \sim \dfrac{1}{ {s+t\choose t}} \dfrac{1}{a^s b^t} X^{s+t}, \text{as} \quad X \to \infty .\] 
\end{proof}

\section{Some computations with mixed Tate motives}
In this section, we give a proof for Theorem \ref{thm:theorem_2}. Let us first recall some notations and conventions. Let $m,n$ be two natural number such that $m-n \geq 2$, $m$ is even and $n$ is odd. In what follows, let us fix a mixed motive $M=(V,D)$ with $\Z$-coefficients with graded quotients $\Z(m), \Z(n)$. We will compute the numbers of $x \in \Ext^1(\Z,(V, D))$ with height bounded by a positive number $B$. We then apply our calculations to the concrete case where $m=12$ and $n=3$. The reason we include this concrete example is that we want to compare it with the situation in the next section where $(V,D)$ also moves among mixed motives with $\Z$-coefficients and with graded quotients $\Z(12), \Z(3), \Z$. 
 
For each prime $p$, let us denote $D_p$ the $p$-adic component of $D \otimes \widehat{\Z}$ and put $V_p=D_p \otimes \Q_p$. By definition $D_p$ is a Galois stable $\Z_p$ lattice inside $V_p$. Let us also recall the definition of $\Sha$ of  a mixed motive.  For a fixed prime $p$ and for each place $v$ of $\Q$, we have special subgroups of $H^{1}_{f}(\Q_{v}, D_p), H^1_{f}(\Q_{v}, V_p)$ and $H^1_{f}(\Q_{v}, V_p/D_p)$ of $H^1(\Q_v, D_p)$, $H^1(\Q_{v}, V_p)$ and $H^1_{f}(\Q_{v}, V_p/D_p)$ respectively. For example: 

\[ H^1_{f}(\Q_v, V_p) = \begin{cases} H_{ur}^1(\Q_v,V_p) &\mbox{if } v \nmid p \\
\ker(H^1(\Q_v, V_p) \to H^1(\Q_v, V_p \otimes B_{\text{cris}})) & \mbox{if } v \mid p. \end{cases}  \]

The Selmer group of $D_p$ is defined to be 
\[ H^{1}_{f}(\Q, V_p/D_p)=\{ x \in H^1(\Q, V_p/D_p)| x_{v} \in H^{1}_{f}(\Q_{v}, V_p/D_p), \forall v \} .\] 
Similarly, we define 
\[ H^1_{f}(\Q, V_p) =\{ x \in H^1(\Q, V_p)| x_{v} \in H^{1}_{f}(\Q_{v}, V_p), \forall v \} .\]
By definition, we have a canonical map $H^{1}_{f}(\Q,V_p) \to H^1_{f}(\Q, V_p/D_p)$. 
\begin{definition}
Define $\Sha (D_p)$ to be the cokernel of the map 
\[ H^{1}_{f}(\Q,V_p) \to H^1_{f}(\Q, V_p/D_p). \] 
\end{definition} 
We define the global $\Sha(D)$ to be: 
\[ \Sha(D)=\prod_{p} \Sha(D_p) .\] 
It is conjectured to be a finite group. In our special case, we recall that $V_p$ fits into an exact sequence 
\[ 0 \to \Q_p(m) \to V_p \to \Q_p(n) \to 0 .\] 
We have the following simple lemma. 

\begin{lem} \label{lem:crystalline}
As a Galois representation of $\Gal(\overline{\Q_v}/\Q_v)$, $V_p$ is unramifed if $v \nmid p$ and it is crystalline if $v \mid p$. Moreover, 

\begin{enumerate} 
\item If $v \nmid p$ then $H^{i}(\Q_v, V_p)=0$ for $i=0,1,2$. In particular, $H^1_{f}(\Q_v, V_p)=0.$
\item If $ v \mid p$ then $H^1_{f}(\Q_{v}, V_p)= H^1(\Q_{v}, V_p)$. Furthermore, $H^0(\Q_v, V_p)=H^2(\Q_v, V_p)=0$.
\end{enumerate}
\end{lem}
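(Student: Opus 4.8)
The plan is to reduce everything to standard computations in Galois cohomology of Tate twists $\Q_p(k)$, since $V_p$ is a two-step extension of such twists. First I would establish the crystalline/unramified claim: away from $p$, each $\Q_p(k)$ is unramified (the inertia acts trivially on $\Q_p(k)$ since it factors through the cyclotomic character, which is unramified away from $p$), hence $V_p$ is an extension of unramified representations and therefore unramified; at $v\mid p$, each $\Q_p(k)$ is crystalline, and a successive extension of crystalline representations whose Hodge--Tate weights are distinct (here $-2m$ and $-2n$, say, with $m\neq n$) has no nontrivial $\mathrm{Ext}$ in the crystalline category obstruction — more precisely the extension class lies in $H^1_f(\Q_p,\Q_p(m-n))$, which for $m-n\geq 2$ equals all of $H^1(\Q_p,\Q_p(m-n))$ by the standard computation of $\dim H^1_f$ via Bloch--Kato (the Hodge--Tate weight is $\leq -2$, so $D_{dR}/\Fil^0$ contributes and $H^1=H^1_f$). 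Hence $V_p$ is itself crystalline.

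For part (i), at $v\nmid p$: since $V_p$ is unramified, $H^i(\Q_v,V_p)$ is computed from the cohomology of $\hat{\Z}$ acting on $V_p$ via Frobenius. One has $H^0(\Q_v,V_p)=V_p^{\mathrm{Frob}_v=1}$ and $H^2(\Q_v,V_p)$ is dual (by local Tate duality) to $H^0(\Q_v,V_p^*(1))$. Because the semisimplification of $V_p$ is $\Q_p(m)\oplus\Q_p(n)$ and $\mathrm{Frob}_v$ acts on $\Q_p(k)$ by $v^{-k}$ (or $v^k$, depending on normalization) which is never $1$ for $k\geq 1$, all Frobenius-invariants vanish; similarly for the dual twisted by $1$ since $m,n\geq 1$ forces the relevant exponents to be nonzero. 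Then $H^1$ vanishes by the local Euler characteristic formula, which gives $\#H^0-\#H^1+\#H^2=0$ for $\ell$-adic representations at $v\nmid p$ (the Euler characteristic is trivial here), so $\dim H^1 = \dim H^0+\dim H^2 = 0$. Since $H^1_f$ is a subspace of $H^1$, it is zero as well.

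For part (ii), at $v\mid p$: the vanishing of $H^0$ and $H^2$ follows the same way — $H^0(\Q_v,V_p)=V_p^{G_v}$, and a crystalline representation with all Hodge--Tate weights negative and nonzero has no Galois invariants (an invariant vector would generate a trivial subrepresentation $\Q_p$ with Hodge--Tate weight $0$, contradicting the weight list $\{-2m,-2n\}$); dually $H^2(\Q_v,V_p)\cong H^0(\Q_v,V_p^*(1))^\vee$ vanishes because $V_p^*(1)$ has Hodge--Tate weights $\{2m-1,2n-1\}$, all strictly positive, hence again no invariants. The equality $H^1_f(\Q_v,V_p)=H^1(\Q_v,V_p)$ is the key point: by Bloch--Kato, $\dim H^1_f(\Q_v,V_p) = \dim H^0(\Q_v,V_p) + \dim (D_{dR}(V_p)/\Fil^0 D_{dR}(V_p))$, while $\dim H^1(\Q_v,V_p) = \dim H^0 + \dim H^2 + [\Q_v:\Q_p]\dim V_p$ by the local Euler characteristic formula; since $H^0=H^2=0$ and all Hodge--Tate weights of $V_p$ are $<0$ (so $\Fil^0 D_{dR}(V_p)=0$ and $D_{dR}/\Fil^0$ has full dimension $\dim V_p$), both sides equal $\dim V_p$, forcing $H^1_f=H^1$.

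The main obstacle — really the only subtle point — is verifying that the extension defining $V_p$ does not destroy crystallinity at $v\mid p$, i.e. that the class lives in $H^1_f$; this rests on the gap $m-n\geq 2$ ensuring the relevant $\mathrm{Ext}^1$ is automatically "geometric" (equivalently, $H^1_f(\Q_p,\Q_p(m-n))=H^1(\Q_p,\Q_p(m-n))$ because the Hodge--Tate weight $-(m-n)\leq -2$ makes $H^1_f$ full-dimensional). Everything else is a bookkeeping exercise with local Euler characteristics, Tate duality, and the Bloch--Kato dimension formula for $H^1_f$; I would present those computations compactly, treating the $\Q_p(k)$ building blocks first and then using the long exact cohomology sequence attached to $0\to\Q_p(m)\to V_p\to\Q_p(n)\to 0$ to pass to $V_p$.
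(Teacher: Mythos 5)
Your route is essentially the paper's: reduce everything to the Tate twists via the sequence $0\to\Q_p(m)\to V_p\to\Q_p(n)\to 0$, obtain crystallinity at $v\mid p$ from $H^1_f(\Q_p,\Q_p(m-n))=H^1(\Q_p,\Q_p(m-n))$ (valid since $m-n\geq 2$), and deduce (i) and (ii) from standard vanishing and dimension computations for the twists (the paper simply cites Schneider \S 3 and Bloch--Kato Example 3.9 where you redo these via Frobenius eigenvalues, local duality, the Euler characteristic formula and the Bloch--Kato formula for $\dim H^1_f$; that part of your argument is fine). The one step that does not hold up is your justification of unramifiedness at $v\nmid p$: an extension of unramified representations is \emph{not} automatically unramified. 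For instance, at $v\neq p$ the Kummer class of a uniformizer in $H^1(\Q_v,\Q_p(1))$ gives a ramified extension of $\Q_p$ by $\Q_p(1)$, even though both graded pieces are unramified. Unramifiedness of $V_p$ is exactly the assertion that the local extension class lies in $H^1_{ur}=H^1_f\subset H^1(\Q_v,\Q_p(m-n))$, so it needs the same kind of input you correctly supply at $v\mid p$. This is precisely how the paper argues: $H^1=H^1_f$ for $\Q_p(m-n)$ at every place; and at $v\nmid p$ one even has $H^1(\Q_v,\Q_p(m-n))=0$, because $m-n\geq 2$ kills $H^0$ and $H^2$ (Frobenius eigenvalues $\neq 1$) and the Euler characteristic away from $p$ vanishes --- i.e.\ the very computation you run for $V_p$ in part (i), applied instead to the twist $\Q_p(m-n)$. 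So the gap is repairable with tools already in your write-up, but as stated the claim ``hence $V_p$ is unramified'' is unjustified, and since your part (i) then uses unramifiedness of $V_p$ to compute $H^0$ and $H^2$, the repair (or the paper's alternative of running the long exact sequence directly, using $H^i(\Q_v,\Q_p(m))=H^i(\Q_v,\Q_p(n))=0$) should be made explicit.

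Two smaller corrections: the Hodge--Tate weights of $\Q_p(m)$ and $\Q_p(n)$ are $-m$ and $-n$, not $-2m$ and $-2n$ (you are conflating them with motivic weights); and the vanishing of $H^0(\Q_v,V_p^*(1))$ requires $m,n\geq 2$, not merely $m,n\geq 1$, since the relevant exponents there are $1-m$ and $1-n$ --- this is exactly the hypothesis the paper's proof invokes. Neither slip affects the conclusion under the lemma's intended hypotheses, but both should be fixed in a final write-up.
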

\begin{proof}
The statements that $V_p$ is unramifed if $v \nmid p$ and it is crystalline if $v \mid p$ follow from the fact that $H^1(\Q_v, \Q_p(m-n))=H^1_{f}(\Q_v, \Q_p(m-n))$ (the case $v \nmid p$ is discussed in \cite[\S 3]{[Sch]}, the case $v|p$ is discussed in \cite[Example 3.9]{[BK]}). For the first statement, we note that the short exact sequence 
\[ 0 \to \Q_p(m) \to V_p \to \Q_p(n) \to 0 ,\] 
induces a long exact sequence of cohomology groups 
\[ \ldots \to H^{i}(\Q_v, \Q_p(m)) \to H^{i}(\Q_v, V_p) \to H^i(\Q_v, \Q_p(n)) \to H^{i+1}(\Q_v, \Q_p(m)) \to \ldots \] 
Since $m, n \geq 2$, $H^i(\Q_v, \Q_p(m))=H^i(\Q_v, \Q_p(n))=0$ for all $i \geq 0$ (see \cite[\S3]{[Sch]}). Therefore, we conclude that $H^i(\Q_v, V_p)=0$ for all $i \geq 0.$ 

The second statement follows from a similar arguments plus the calculations in \cite[Example 3.9]{[BK]}. 

\end{proof}

To prepare the the proof the main Theorem \ref{thm:theorem_2}, we recall some fundamental results about the algebraic $K$-groups $K_n(\Z)$.  
\begin{lem} \label{lem:Soule} (The Tamagawa number conjecture for $\Z(m)$, see $\cite{[Bloch-Kato]}$)
Let $m$ be a positive odd integer. Then, there exists an element $b_{m}$ such that 
\begin{enumerate}
\item $b_{m}$ is the generator of $K_{2m-1}(\Z)$ modulo torsion. 
\item $r_{\infty}(b_m)=2^a \dfrac{(m-1)! \zeta(m)}{\Sha(m)}$. 
where $a$ is an unknown number. 
\end{enumerate}
\end{lem}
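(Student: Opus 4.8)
The plan is to deduce this from Borel's theorem on the ranks of the $K$-groups of $\Z$ together with the known cases of the Tamagawa number conjecture for the Tate motives $\Q(m)$. First I would invoke Borel's computation: for odd $m\ge 3$ (the range that matters here) the group $K_{2m-1}(\Z)$ has rank one, so $K_{2m-1}(\Z)/(\mathrm{torsion})\cong\Z$, and I would take $b_m$ to be a generator; this gives part (1). Next I would use that the Beilinson regulator $r_\infty$ identifies $K_{2m-1}(\Z)\otimes\R$ with the one-dimensional Deligne cohomology attached to $\Q(m)$, and that Borel's theorem forces the covolume of the lattice $r_\infty(K_{2m-1}(\Z))$ to be a nonzero rational multiple of $\zeta(m)$ in the normalization fixed above; thus $r_\infty(b_m)=q\,\zeta(m)$ for some $q\in\Q^\times$, and what remains is to pin down $q$.

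To determine $q$ I would appeal to the Tamagawa number conjecture for $M=\Q(m)$ in the formulation of \cite{[BK]}. Since $\Q(m)$ is a Tate twist of the unit motive it is unramified at every finite prime, so all non-archimedean local Tamagawa factors are trivial; moreover the $H^0$ and $H^2$ of the relevant Galois representations vanish (by local statements entirely analogous to Lemma \ref{lem:crystalline}), so only the degree-one $f$-cohomology contributes. What survives of the conjectural identity is then precisely $r_\infty(b_m)=\pm(m-1)!\,\zeta(m)/\#\Sha(m)$, where $\Sha(m)$ is the finite group defined above (the cokernel of $H^1_f(\Q,V_p)\to H^1_f(\Q,V_p/D_p)$, which here measures the gap between the Selmer group and the image of $K$-theory). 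This form of the conjecture is the theorem of Bloch--Kato \cite{[Bloch-Kato]}, whose proof combines Soul\'e's surjectivity result for \'etale Chern classes, the Iwasawa theory of cyclotomic fields, and the Iwasawa main conjecture, with $\#\Sha(m)$ emerging from the associated zeta-element.

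The hard part, and the reason the statement carries the undetermined factor $2^a$, is the $2$-primary part: the cyclotomic Iwasawa-theoretic input is available only away from $2$, so the previous step yields the equality only up to a power of $2$. I would absorb this $2$-adic indeterminacy, together with the ambiguous sign, into $2^a$, which gives the displayed formula $r_\infty(b_m)=2^a\,(m-1)!\,\zeta(m)/\Sha(m)$. Apart from this genuine $2$-adic obstruction there is no real difficulty: the rest is quoting Borel's theorem, transcribing the Bloch--Kato computation of the Tamagawa number of $\Q(m)$, and checking that the archimedean normalization of $r_\infty$ agrees with the one used elsewhere in the paper.
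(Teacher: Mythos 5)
Your proposal is correct and matches the paper's treatment: the paper gives no argument for this lemma, simply quoting it as the known Tamagawa number conjecture for $\Z(m)$ from the cited Coates--Raghuram--Saikia--Sujatha reference \cite{[Bloch-Kato]}. Your sketch (Borel's rank-one theorem for $K_{2m-1}(\Z)$, the regulator comparison, and the Bloch--Kato identity proved via Soul\'e's \'etale Chern classes and the Iwasawa main conjecture, valid only away from $2$, with the sign and $2$-primary ambiguity absorbed into $2^a$) is precisely the content of that citation.
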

We also use the following lemma about the precise structure of $K_n(\Z)$. 

\begin{lem} (see \cite[Theorem 10.1]{[Weibel]})  \label{lem:k_n}
\begin{enumerate}
    \item If $m=8k+1$, then $K_m(\Z)=\Z \oplus \Z/2.$
    \item If $m=8k+5$, then $K_{m}(\Z)=\Z$.
\end{enumerate}
\end{lem}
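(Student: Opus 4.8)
The plan is to indicate how this follows from Borel's rank theorem together with the (now--proved) Quillen--Lichtenbaum conjecture, as in \cite{[Weibel]}. First, Borel's computation of the ranks of $K$-groups of rings of integers gives that $K_m(\Z)\otimes\Q$ is one-dimensional whenever $m\equiv 1\pmod 4$, hence in both cases $m=8k+1$ and $m=8k+5$. Since $K_m(\Z)$ is finitely generated (Quillen), we obtain $K_m(\Z)\cong\Z\oplus T$ with $T$ finite, and it remains only to determine $T\cong\bigoplus_{\ell} T\{\ell\}$ one prime $\ell$ at a time.

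For an odd prime $\ell$ one uses the motivic (Atiyah--Hirzebruch-type) spectral sequence $E_2^{p,q}=H^p_{\mathrm{et}}(\Z[1/\ell],\Z_\ell(q))\Rightarrow K_{2q-p}(\Z)\otimes\Z_\ell$, whose construction and convergence rest on the norm residue isomorphism theorem of Voevodsky--Rost (the difference between $\Z$ and $\Z[1/\ell]$ being $\ell$-adically invisible in the degrees at issue, since the $K$-theory of $\F_\ell$ is $\ell$-torsion-free in positive degrees). Because $\Z[1/\ell]$ has $\ell$-cohomological dimension two and $m$ is odd, the only term contributing to $K_m(\Z)\otimes\Z_\ell$ is $E_2^{1,i}=H^1_{\mathrm{et}}(\Z[1/\ell],\Z_\ell(i))$ with $i=(m+1)/2$, whose torsion subgroup is $H^0_{\mathrm{et}}(\Q,\Q_\ell/\Z_\ell(i))$; for $\ell$ odd and $i\neq 0$ this vanishes. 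Hence $K_m(\Z)$ has no odd-primary torsion, so $T$ is a $2$-group in both families.

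The substance of the argument is at $\ell=2$. Here the naive two-line argument breaks down because $\Z[1/2]$ has infinite $2$-cohomological dimension, on account of the real place, so one must instead run the genuine Bloch--Kato/Beilinson--Lichtenbaum spectral sequence for $\Z$ --- available by Voevodsky's proof of the Milnor conjecture --- which carries extra $2$-periodic contributions from the archimedean place. Feeding it the full $2$-adic computation of $H^{*}_{\mathrm{et}}(\Z[1/2],\Z_2(i))$ (via Poitou--Tate duality, class field theory, and Iwasawa theory, i.e.\ the Rognes--Weibel determination of the $2$-complete $K$-theory of $\Z$) and sorting the outcome according to $m\bmod 8$, one finds that the class of $H^0_{\mathrm{et}}(\Q,\Q_2/\Z_2(i))\cong\Z/2$ persists in $K_m(\Z)$ when $m\equiv 1\pmod 8$ but is annihilated by the interaction with the real-place contributions when $m\equiv 5\pmod 8$ --- consistently with the map from $K_m(\Z)$ to topological $K$-theory, where $\pi_{8k+1}KO=\Z/2$ while $\pi_{8k+5}KO=0$. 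This $2$-primary \'etale cohomology computation, together with the verification that the pertinent differentials vanish and the ensuing extension problem splits as asserted, is the hard part; granting Borel's theorem, the norm residue theorem, and this input, both assertions follow.
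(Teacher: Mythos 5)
The paper gives no argument for this lemma at all --- it simply quotes the known computation of $K_*(\Z)$ from Weibel's book --- and your sketch reproduces exactly the proof underlying that reference (Borel's rank theorem plus Quillen finiteness, vanishing of odd torsion via the \'etale/motivic descent spectral sequence, and the Rognes--Weibel two-primary computation resting on Voevodsky, calibrated against $\pi_*KO$), so in substance you are following the same route as the cited source. One justification, however, is wrong as stated: $H^0_{\mathrm{et}}(\Q,\Q_\ell/\Z_\ell(i))$ does \emph{not} vanish for every odd $\ell$ and every $i\neq 0$; it is cyclic of order $\ell^{\nu}$ with $\nu\geq 1$ exactly when $(\ell-1)\mid i$ (e.g.\ $\ell=3$, $i=2$ gives $\Z/3$, which is why $K_3(\Z)=\Z/48$ has odd torsion). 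What saves your argument is that here $i=(m+1)/2$ is odd because $m\equiv 1\pmod 4$, while $\ell-1$ is even for odd $\ell$, so $(\ell-1)\nmid i$ and the invariants do vanish; you should state the vanishing in that form rather than for arbitrary nonzero twists. With that correction the odd-primary step is sound, and the rest is, as you say, an appeal to the hard two-primary input. A last small caveat: as in Weibel the statement needs $m\geq 5$ (so $k\geq 1$ when $m=8k+1$), since $K_1(\Z)=\Z/2$ has rank $0$; this does not affect the paper's applications, which use $m=5$ and $m=17$.
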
 
By Lemma \ref{lem:Soule} and Lemma \ref{lem:k_n}, we conclude that if $m$ is a positive integer of the form $8k+1$, then $b_m$ is a generator of $K_{m}(\Z)$ modulo the $2$-torsion part. In the case $m=8k+5$, $K_m(\Z)$ is an infinite cyclic group generated by $b_m$.

For simplicity we will write $b_{m,p}$ the image of $b_{m}$ under the Chern class map 
\[ K_{2m-1}(\Z) \otimes \Z_p \to H^1(\Z[1/p], \Z_p(m)) .\] 
As a consequence of Quillen-Lichtenbaum theorem (see \cite{[Bloch-Kato]} for more detailed discussions), the above Chern class map is an isomorphism for $p$ odd. Consequently, $b_{m,p}$ is the generator of $H^1(\Z[1/p], \Z_p(m))$ for all $p$ odd.

We also have the following proposition. 
\begin{prop}
Let $V_p, D_p$ be defined as above. Then 
\[ \Sha(D_p)= \ker \left(H^2(\Z[1/p],D_p) \to H^2(\Q_p,D_p) \right) .\]

\end{prop}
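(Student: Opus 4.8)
The plan is to identify $\Sha(D_p)$, which by definition is the cokernel of the map $H^1_f(\Q, V_p) \to H^1_f(\Q, V_p/D_p)$, with the indicated kernel of a localization map on $H^2$. The main tool is the Poitou--Tate nine-term exact sequence (or equivalently the comparison of Selmer and dual Selmer complexes) for the finite set of places $S = \{p, \infty\}$, together with the local computations already recorded in Lemma~\ref{lem:crystalline}.

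First I would reduce everything to cohomology over $\Z[1/p]$. Since $V_p$ is crystalline at $p$ and unramified away from $p$ (Lemma~\ref{lem:crystalline}), the local conditions defining the Selmer groups are the ``unramified'' ones outside $p$ and the ``flat/crystalline'' one at $p$; and at $\infty$ the relevant group $H^1(\R, V_p/D_p)$ vanishes because $p$ is odd and $V_p/D_p$ is $2$-divisible with finite coinvariants. Hence $H^1_f(\Q, V_p/D_p) = H^1(\Z[1/p], V_p/D_p)$, and similarly $H^1_f(\Q, V_p)$ maps onto $H^1(\Z[1/p], V_p)$ (using part (2) of Lemma~\ref{lem:crystalline} that $H^1_f(\Q_p, V_p) = H^1(\Q_p, V_p)$, so no local condition is imposed at $p$ on the $V_p$-side either, while the condition outside $p$ is automatic). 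So $\Sha(D_p)$ is the cokernel of $H^1(\Z[1/p], V_p) \to H^1(\Z[1/p], V_p/D_p)$.

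Next I would feed the short exact sequence $0 \to D_p \to V_p \to V_p/D_p \to 0$ of $\Gal(\overline{\Q}/\Q)$-modules (unramified outside $p$, so modules over $\Z[1/p]$) into the long exact sequence of étale cohomology of $\Spec \Z[1/p]$:
\[
H^1(\Z[1/p], V_p) \to H^1(\Z[1/p], V_p/D_p) \to H^2(\Z[1/p], D_p) \to H^2(\Z[1/p], V_p).
\]
This immediately exhibits $\Sha(D_p) = \mathrm{coker}(H^1(\Z[1/p], V_p) \to H^1(\Z[1/p], V_p/D_p))$ as the kernel of $H^2(\Z[1/p], D_p) \to H^2(\Z[1/p], V_p)$. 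To finish, I must replace the target $H^2(\Z[1/p], V_p)$ by $H^2(\Q_p, D_p)$. The Poitou--Tate sequence for $S = \{p, \infty\}$ gives an exact sequence $H^2(\Z[1/p], D_p) \to \bigoplus_{v \in S} H^2(\Q_v, D_p) \to H^0(\Z[1/p], V_p^{*}(1))^{\vee} \to 0$, and one checks that the contributions at $\infty$ vanish (odd residue characteristic) and that $H^2(\Z[1/p], V_p)$, being a $\Q_p$-vector space, has image in $H^2(\Q_p, V_p)$ with the right kernel; comparing the integral statement via the exact sequence $0 \to D_p \to V_p \to V_p/D_p \to 0$ locally at $p$ and using $H^1(\Q_p, V_p) \twoheadrightarrow H^1(\Q_p, V_p/D_p)$ (from Lemma~\ref{lem:crystalline}(2) and the fact that $H^2(\Q_p, V_p)$ injects appropriately) pins down that the map $H^2(\Z[1/p], D_p) \to H^2(\Q_p, D_p)$ has the same kernel as $H^2(\Z[1/p], D_p) \to H^2(\Z[1/p], V_p)$.

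The step I expect to be the main obstacle is precisely this last comparison: matching the cokernel-of-$H^1$ description against a \emph{kernel-on-$H^2$-into-a-single-local-term} description requires a careful diagram chase through Poitou--Tate, keeping track of which global $H^2$ classes die locally at $p$ versus which die in the ambient rational cohomology, and verifying that no extra contributions enter from the place $\infty$ or from torsion in $H^2(\Z[1/p], D_p)$. Everything else — the vanishing statements, the identifications of Selmer conditions with $\Z[1/p]$-cohomology — is routine given Lemma~\ref{lem:crystalline} and the standard properties of the Greenberg--Wiles / Poitou--Tate formalism, so I would organize the write-up as: (i) reduce to $\Z[1/p]$-cohomology; (ii) write the long exact sequence in $D_p \to V_p \to V_p/D_p$; (iii) invoke Poitou--Tate to rewrite the target, handling $v = \infty$ trivially; (iv) conclude.
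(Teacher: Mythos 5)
There is a genuine gap, and it occurs at your very first reduction. You claim $H^1_f(\Q, V_p/D_p) = H^1(\Z[1/p], V_p/D_p)$, arguing that since $H^1_f(\Q_p, V_p) = H^1(\Q_p, V_p)$ no condition is imposed at $p$. But the Bloch--Kato local condition on the divisible quotient is $H^1_f(\Q_p, V_p/D_p) = \mathrm{Im}\bigl(H^1_f(\Q_p, V_p) \to H^1(\Q_p, V_p/D_p)\bigr)$, and the map $H^1(\Q_p, V_p) \to H^1(\Q_p, V_p/D_p)$ is \emph{not} surjective: since $H^2(\Q_p, V_p) = 0$ (Lemma~\ref{lem:crystalline}(2)), its cokernel is exactly $H^2(\Q_p, D_p)$, which is in general a nonzero finite group. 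So the condition at $p$ is $\ker\bigl(H^1(\Q_p, V_p/D_p) \to H^2(\Q_p, D_p)\bigr)$, a proper subgroup in general, and the correct identification is $H^1_f(\Q, V_p/D_p) = \ker\bigl(H^1(\Z[1/p], V_p/D_p) \to H^2(\Q_p, D_p)\bigr)$ --- the group $H^2(\Q_p, D_p)$ appearing in the statement enters precisely here, not through a Poitou--Tate manipulation afterwards. With your identification the argument proves the wrong statement: one also has $H^2(\Z[1/p], V_p) = 0$ (this follows from Quillen--Lichtenbaum/Soul\'e vanishing of $H^2(\Z[1/p], \Q_p(m))$ and $H^2(\Z[1/p], \Q_p(n))$, a fact the paper proves and which you treat as an unknown), so "$\,\ker(H^2(\Z[1/p], D_p) \to H^2(\Z[1/p], V_p))$" is all of $H^2(\Z[1/p], D_p)$, and your conclusion would be $\Sha(D_p) = H^2(\Z[1/p], D_p)$. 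That is not the proposition and is false in general. Relatedly, your closing claim that $H^2(\Z[1/p], D_p) \to H^2(\Q_p, D_p)$ has the same kernel as $H^2(\Z[1/p], D_p) \to H^2(\Z[1/p], V_p)$ cannot hold: the latter map is zero, while the former generally is not; the "main obstacle" you flag is in fact the point where the route breaks down.

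For comparison, the paper's proof keeps the local condition at $p$ throughout: it shows $H^1_f(\Q, V_p) = H^1(\Z[1/p], V_p)$, identifies $H^1_f(\Q, V_p/D_p)$ with $\ker\bigl(H^1(\Z[1/p], V_p/D_p) \to H^2(\Q_p, D_p)\bigr)$ via the local boundary maps, proves $H^2(\Z[1/p], V_p) = 0$ so that $H^1(\Z[1/p], V_p/D_p)$ surjects onto $H^2(\Z[1/p], D_p)$, and then a short snake-lemma chase on the evident two-row diagram gives $\Sha(D_p) = \ker\bigl(H^2(\Z[1/p], D_p) \to H^2(\Q_p, D_p)\bigr)$ directly, with no appeal to the nine-term Poitou--Tate sequence at this stage. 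Your write-up could be repaired by replacing step (i) with this correct description of the Selmer condition at $p$; the rest then collapses to the paper's diagram chase.
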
 
\begin{proof}
By Lemma \ref{lem:crystalline}, we have  
\[ H^1_{f}(\Q, V_p)=H^1(\Z[1/p], V_p) .\]
The long exact sequence associated with the short exact sequence $$0 \to D_p \to V_p \to V_p/D_p \to 0$$ gives 
\begin{align*}
H^{1}_{f}(\Q_{v}, V_p/D_p) &=\text{Im}\left(H^1(\Q_{v}, V_p) \to H^1(\Q_v, V_p/D_p) \right) \\
                           & = \ker(H^1(\Q_{v}, V_p/D_p) \to H^2(\Q_{v}, D_p)) .
\end{align*} 
Therefore, we have 
\begin{align*}
H^1_{f}(\Q, V_p/D_p) &=\ker(H^1(\Q, V_p/D_p) \to \bigoplus_{v} H^2(\Q_v,D_p)) \\
                        &=\ker \left(H^1(\Z[1/p], V_p/D_p) \to H^2(\Q_p,D_p) \right) .
\end{align*}
We also observe that the short exact sequence 
\[ 0 \to \Q_p(m) \to V_p \to \Q_p(n) \to 0 ,\] 
induces the following exact sequence 
\[ H^2(\Z[1/p], \Q_p(m)) \to H^2(\Z[1/p], V_p) \to H^2(\Z[1/p], \Q_p(n)) .\] 
By the Quillen-Lichtenbaum isomorphism (see \cite{[Bloch-Kato]}) and \cite[Theorem 10.1]{[Weibel]}, we know that $H^2(\Z[1/p], \Q_p(m))=H^2(\Z[1/p], \Q_p(n))=0$. Therefore, $H^2(\Z[1/p], V_p)=0$ as well. In summary, we have the following commutative diagram
\[\begin{tikzcd}
       H^1(\Z[1/p],V_p) \ar[r] \ar[d]  &H^1(\Z[1/p],V_p/D_p) \ar[r, twoheadrightarrow] \ar[d] &H^2(\Z[1/p],D_p) \ar[d]   \\
   H^1_{f}(\Q, V_p/D_p) \ar[r, hook] &H^1(\Z[1/p],V_p/D_p) \ar[r] &H^2(\Q_p,D_p)).
   \end{tikzcd} \]
By snake lemma, we see that 
\[ \Sha(D_p)= \ker \left(H^2(\Z[1/p],D_p) \to H^2(\Q_p,D_p) \right) .\]
\end{proof}
We want to relate $\Sha(D_p)$ with $\Sha(\Z_p(n))$ and $\Sha(\Z_p(m))$. For simplicity, we will denote the later groups by $\Sha_{p}(n)$ and $\Sha_{p}(m)$ respectively.  Taking long exact sequences associated to 
\[ 0 \to \Z_p(m) \to D_p \to \Z_p(n) \to 0,\] 
we have the following commutative diagram
\[\xymatrix{
       0 \ar[r] &H^2(\Z[1/p],\Z_p(m))/ \text{im}(\delta) \ar[r] \ar[d]  &H^2(\Z[1/p],D_p) \ar[r] \ar[d] &H^2(\Z[1/p],\Z_p(n)) \ar[d] \ar[r]  &0  \\
       0 \ar[r] &H^2(\Q_p, \Z_p(m))/\text{im}(\delta) \ar[r] &H^2(\Q_p,D_p) \ar[r] &H^2(\Q_p,\Z_p(n)) \ar[r] & 0   \\
 }\]
here $\delta$ are the connecting homomorphisms 
\[ \delta: H^1(\Z[1/p],\Z_p(n)) \to H^2(\Z[1/p], \Z_p(m)) \]
and 
\[ \delta: H^1(\Q_p, \Z_p(n)) \to H^2(\Q_p, \Z_p(m)).\] 
Note that if we think about $D_p$ as an element $[a]$ in $\Ext^1(\Z_p(n), \Z_p(m))$ then $\delta$ is the cup product of the $H^1(\Z[1/p],\Z_p(n))$ with $[a]$. Additionally, we note that the map $H^2(\Z[1/p], \Z_p(m)) \to H^2(\Q_p, \Z_p(m))$ is the zero map for $p>2$. This can be seen by looking at the long exact sequence of Poitou-Tate
\[ 0 \to H^0(\Z[1/p], \Q_p/\Z_p(1-m))  \to H^0(\Q_p, \Q_p/\Z_p(1-m)) \to H^2(\Z[1/p], \Z_p(m))^{\vee} \] 
Since $\Gal(\Q_p(\zeta_{p^{\infty}})/\Q_p)= \Gal(\Q(\zeta_{p^{\infty}})/\Q)$, we conclude that the map 
\[H^0(\Z[1/p], \Q_p/\Z_p(1-m))  \to H^0(\Q_p, \Q_p/\Z_p(1-m)), \]
is an isomorphism. Consequently, by local Tate duality, we see that the map $H^2(\Z[1/p], \Z_p(m)) \to H^2(\Q_p, \Z_p(m))$ is the zero map. By snake lemma, we have the short exact sequence 
\begin{equation}  \label{eq:Sha}
0 \to H^2(\Z[1/p],\Z_p(m))/ \text{im}(\delta) \to \Sha(D_p) \to \Sha(\Z_p(n)) \to 0, 
\end{equation}
where $\delta$ is the connecting homomorphism 
\[ \delta: H^1(\Z[1/p], \Z_p(n)) \to H^2(\Z[1/p], \Z_p(m)) .\]
Note that because $H^1(\Z[1/p], \Z_p(n))$ is a free $\Z_p$-module of rank $1$, the $\text{im}(\delta)$ is a cyclic subgroup of $H^2(\Z[1/p], \Z_p(m))$. Suppose that the order of $\text{im}(\delta)$ is $p^a$. Note that we have an exact sequence 
\[ H^1(\Z[1/p],\Z_p(m)) \hookrightarrow H^1(\Z[1/p], D_p) \to H^1(\Z[1/p], \Z_p(n)) \xrightarrow{\delta} H^2(\Z[1/p], \Z_p(m)) \] 
Hence, 
\[ \ker(\delta)=p^a H^1(\Z[1/p], \Z_p(n))=|\text{im}(\delta)| H^1(\Z[1/p], \Z_p(n)).\] 
Let $|\delta|=\prod_{p} |\text{im}(\delta)_{p}|$. As before, we can see that 
\[ X= \Z |\delta| b_{n} \oplus T, \]
where $b_{n}$ is the generator of $K_{2n-1}(\Z)$ described in Lemma $\ref{lem:Soule}$ and $$T=H^1(\Z[1/p], \Z_p(m)).$$ By Mazur-Wiles theorem (see \cite[Theorem 8.7]{[Weibel]}), we have 
\[ \#T = \prod_{p} \# H^1(\Z[1/p], \Z_p(m))= \dfrac{\# \Sha(m)}{\zeta(1-m)} .\] 
Also, by the exact sequence $\ref{eq:Sha}$ we have 
\[ \# \Sha(D)=\dfrac{\# \Sha(m) \# \Sha(n)}{|\delta|} .\]
By Lemma \ref{lem:crystalline}, the infinite place is the only place that contributes to the height function $H_{*, \diamond}$. Therefore, the order of the set $\{ x \in X| H_{*, \diamond}(x)<B \}$ is given by
\[ |T| \times \\
 \# \{z \in \Z| \exp(|r_{\infty}(ub_{m-n})|^{1/m-n}+|z r_{\infty}(|\delta| b_{n})|^{1/n}) \leq B \}  .\]
By Lemma \ref{lem: counting} and Lemma \ref{lem:Soule}, this number is asymptotically equal to 
\begin{equation*}
\begin{split}
|T| \dfrac{\log(B)^n}{|\delta| r_{\infty}(b_n)} &= 2^{t} \dfrac{\# \Sha(m)}{\zeta(1-m)} \dfrac{\#\Sha(n)}{|\delta| (n-1)! \zeta(n)} \log(B)^n \\  &=2^t \dfrac{\# \Sha(D)}{(n-1)! \zeta(n) \zeta(1-m)} \log(B)^n. 
\end{split} 
\end{equation*}
Note that we do not know the exact power of $2$ in this formula. 

\begin{rem}
I am thankful to the referee for asking whether we can determine the power of $2$ precisely in the above calculation. This could, in principle, be done if we know that the Quillen-Lichtenbaum isomorphism and the Mazur-Wiles formula are true at the $p=2$.

\end{rem}

In particular when $m=12$ and $n=3$, we have the following.
\begin{cor} \label{cor:fixed_quotient}
Let $M=(V,D)$ be a fixed motive with $\Z$ coefficients with graded quotients $\Z(12), \Z(3)$. Let $X=\Ext^1(\Z,D)$. Then 
\[ \# \{x \in X| H_{\star, \diamond} (x) \leq B \} \sim 2^{a} \dfrac{\Sha(D)}{\zeta(3) \zeta(-11)} \log(B)^3 . \] 
\end{cor}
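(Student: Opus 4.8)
The plan is to specialize the general asymptotic formula just derived. The paper has established that for a fixed mixed Tate motive $M=(V,D)$ with graded quotients $\Z(m)$ and $\Z(n)$, where $m-n\geq 2$, $m$ even, $n$ odd, one has
\[ \#\{x\in X\mid H_{\star,\diamond}(x)\leq B\}\sim 2^{t}\dfrac{\#\Sha(D)}{(n-1)!\,\zeta(n)\,\zeta(1-m)}\log(B)^{n}. \]
So the corollary is the instance $m=12$, $n=3$: indeed $m-n=9\geq 2$, $m=12$ is even, and $n=3$ is odd, so the hypotheses of Theorem \ref{thm:theorem_2} are met.

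The only remaining arithmetic input is the evaluation $(n-1)! = (3-1)! = 2! = 2$. So the denominator $(n-1)!\,\zeta(n)\,\zeta(1-m)$ becomes $2\,\zeta(3)\,\zeta(-11)$. The factor $2$ here is a power of $2$, so it can be absorbed into the ambiguous constant $2^{t}$; writing $a=t-1$ for the resulting (still unknown) exponent, one gets
\[ \#\{x\in X\mid H_{\star,\diamond}(x)\leq B\}\sim 2^{a}\dfrac{\#\Sha(D)}{\zeta(3)\,\zeta(-11)}\log(B)^{3}, \]
which is precisely the claimed statement with $n=3$ appearing in $\log(B)^{n}$. One should also note (as Lemma \ref{lem:crystalline} guarantees in this setting) that $V_p$ is unramified away from $p$ and crystalline at $p$, so only the archimedean place contributes to $H_{\star,\diamond}$; this is exactly the input used in deriving the general formula, so nothing new is needed.

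There is essentially no obstacle here — the corollary is a direct numerical specialization, and the only subtlety worth flagging is bookkeeping of the power of $2$: since $(n-1)!=2$ is itself a power of $2$ and the exponent $t$ in the general formula is already unknown, merging it with the $2$ in the denominator changes the unknown exponent but nothing else, and the referee's remark already explains why the exact power of $2$ is inaccessible without Quillen–Lichtenbaum and Mazur–Wiles at $p=2$. Thus the proof is simply: apply Theorem \ref{thm:theorem_2} with $m=12$, $n=3$, substitute $(n-1)!=2$, and absorb this factor into the $2$-power constant.
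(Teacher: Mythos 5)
Your proposal is correct and matches the paper's own treatment: the corollary is stated there as an immediate specialization of Theorem \ref{thm:theorem_2} to $m=12$, $n=3$, with $(n-1)!=2$ absorbed into the undetermined power of $2$. Nothing further is needed.
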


\section{Heights of motives with graded quotients \\ $\Z(12), \Z(3), \Z$.}
In this section, we answer Kato's questions about the number of mixed motives with fixed graded quotients $\Z(12), \Z(3), \Z$ and bounded heights. Unlike the situation in Corollary \ref{cor:fixed_quotient} discussed in the previous section, the mixed motives with $\Z$-coefficients and with graded quotients $\Z(12), \Z(3)$ will also vary. More precisely, if we denote by $X$ the set of mixed motives with $\Z$-coefficients and with graded quotients $\Z(12), \Z(3), \Z$ then 
\[ X= \bigcup_{M \in \Ext^1(\Z(3), \Z(12))} \Ext^1(\Z, M) .\] 

For the definition of heights in this case, see $\cite{[Kato1]}$ for more details. We note that in this case, only archimedean places contribute to heights. Therefore, this problem is purely a problem of finding a ``basis" for $X$. The rest is a basic counting problem.

\subsection{Proof of theorem $\ref{theorem3}$}
First of all, we recall the following important fact. 

\begin{lem} \label{cup}
\noindent
\begin{enumerate}
\item We have the following 
\begin{equation*}
    H^2(\Z[1/p],\Z_p(12)) = \begin{cases}
               0               & \text{if} \quad p \neq 691 \\
               \Z/691 & \text{if} \quad p=691. 
           \end{cases}
\end{equation*}

\item The cup product of $b_{3,p} \cup b_{9,p} \in H^2(\Z[1/p], \Z_p(12))$ is non-zero at $p=691$ and is $0$ at $p \neq 691$. 

\end{enumerate}
\end{lem}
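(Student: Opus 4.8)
The plan is to prove the two parts separately, as part (1) is a well-known computation of étale cohomology / $K$-theory of $\Z$ and part (2) requires an honest non-vanishing statement whose only known source is numerical/$p$-adic $L$-function input.

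For part (1), I would identify $H^2(\Z[1/p],\Z_p(12))$ via the Quillen--Lichtenbaum isomorphism (valid for $p$ odd, as already invoked in the excerpt) with the torsion group $K_{22}(\Z)\otimes\Z_p$, and then read off its order from the Mazur--Wiles theorem (i.e. the proof of the main conjecture of Iwasawa theory), which gives $\#H^2(\Z[1/p],\Z_p(m))$ up to $p=2$ subtleties as the $p$-part of the numerator of $\zeta(1-m)/\,$(something); concretely $\#\prod_p H^2(\Z[1/p],\Z_p(12))$ is the numerator of $\zeta(-11)=B_{12}/12$, and $B_{12}$ has numerator divisible precisely by the irregular prime $691$ (with multiplicity one, since $691$ is not an index of irregularity-$\geq 2$ prime). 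So the group is $0$ for $p\neq 691$, $2$ and $\Z/691$ at $p=691$; the $p=2$ case is handled by Rognes--Weibel (or just the known computation $K_{22}(\Z)=\Z/691$, using Weibel's Theorem 10.1 cited in the excerpt). This step is essentially bookkeeping with results already assumed.

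For part (2), once we know $H^2(\Z[1/p],\Z_p(12))$ vanishes for $p\neq 691$, the cup product $b_{3,p}\cup b_{9,p}$ automatically lands in $0$ there, so there is nothing to prove away from $691$. At $p=691$ the target is $\Z/691$, and I would argue that $b_{3,691}\cup b_{9,691}$ generates it. The natural approach is to recognize this cup product as (a unit multiple of) the image of the class governing the relevant Iwasawa-module extension: by work of Soulé / Beilinson--Deligne and the explicit reciprocity laws, cup products of cyclotomic/Soulé elements $b_{i,p}\cup b_{j,p}$ with $i+j=m$ compute the $p$-adic regulator, which by the $p$-adic analogue of the Beilinson conjecture (Perrin-Riou, Beilinson--Deligne) is related to the $p$-adic $L$-value, and this value is a $691$-adic unit exactly because $691 \parallel \operatorname{num}(\zeta(-11))$ with multiplicity one. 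Equivalently, one can phrase it via the structure of the Galois group of the maximal abelian pro-$691$ extension of $\Q(\zeta_{691})$ unramified outside $691$: Ribet's converse to Herbrand shows the relevant eigenspace is nonzero, and (by the main conjecture) cyclic of order $691$, and the cup product pairing on Soulé classes is a perfect pairing onto it, forcing $b_{3,691}\cup b_{9,691}\neq 0$.

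The main obstacle is precisely this last non-vanishing at $p=691$: it is not a formal consequence of the rank computations but requires knowing that the relevant $691$-adic $L$-value (equivalently the numerator of the Bernoulli number $B_{12}$) has $691$-valuation exactly one, so that the cyclic group $\Z/691$ is hit by the cup product rather than, say, $691$ dividing it twice and the pairing degenerating. I would cite the classical fact that $691$ is an irregular prime of index of irregularity one for this exponent, combined with Ribet's theorem and the Mazur--Wiles main conjecture, to conclude that the cup-product pairing $H^1(\Z[1/p],\Z_p(3))\times H^1(\Z[1/p],\Z_p(9))\to H^2(\Z[1/p],\Z_p(12))=\Z/691$ is surjective, hence $b_{3,691}\cup b_{9,691}$ is a generator. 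Everything else — the vanishing at other primes, the odd-$p$ identifications — is standard and follows from the results already cited in the excerpt (Quillen--Lichtenbaum, Weibel's Theorem 10.1, Bloch--Kato).
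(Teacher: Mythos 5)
Part (1) of your argument is correct and is essentially the paper's: the paper simply combines the Quillen--Lichtenbaum isomorphism $K_{22}(\Z)\otimes\Z_p\cong H^2(\Z[1/p],\Z_p(12))$ with Weibel's computation $K_{22}(\Z)=\Z/691$; your additional Mazur--Wiles/Bernoulli bookkeeping is harmless. Likewise, the vanishing of the cup product for $p\neq 691$ is automatic once the target group vanishes, as you say.

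The genuine gap is your non-vanishing argument at $p=691$. Ribet's theorem and the Mazur--Wiles main conjecture tell you that the target $H^2(\Z[1/691],\Z_{691}(12))$ is nonzero and cyclic of order $691$; they say nothing about whether the particular class $b_{3,691}\cup b_{9,691}$ is nonzero, i.e.\ whether the cup product map $H^1(\Z[1/p],\Z_p(3))\otimes H^1(\Z[1/p],\Z_p(9))\to H^2(\Z[1/p],\Z_p(12))$ is surjective. This surjectivity is not a formal consequence of any perfect duality: Poitou--Tate pairs $H^1(\Z[1/p],\Z_p(m))$ against a compactly supported $H^2$ with coefficients $\Q_p/\Z_p(1-m)$, not the two Soul\'e classes against each other, and there is no general theorem forcing cup products of Soul\'e elements to generate $H^2$. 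In fact, the surjectivity of exactly this pairing is the content of a conjecture of McCallum and Sharifi, and the link you invoke between the cup product and $p$-adic $L$-values (``the cup product computes the $p$-adic regulator, which is a $691$-adic unit because $691$ exactly divides the numerator of $B_{12}$'') is essentially Sharifi's conjecture rather than a consequence of explicit reciprocity or Perrin-Riou's $p$-adic Beilinson framework; the exact divisibility of the Bernoulli numerator does not by itself exclude $b_{3,691}\cup b_{9,691}=0$. The paper closes this point differently: it cites the explicit computations of McCallum--Sharifi, which verify numerically that this particular cup product is nonzero at $p=691$. So your proof of part (2) at $p=691$ needs either that explicit computation or a proved case of the Sharifi-type results, not the general structural theorems you list.
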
 
\begin{proof}
The first statement is a consequence of the Quillen-Lichtenbaum theorem namely, $K_{22}(\Z)\otimes \Z_p  \cong H^2(\Z[1/p], \Z_p(12))$ and the fact that $K_{22}(\Z)=\Z/691$ (see \cite[Table 10.1.1]{[Weibel]}). The second statement follows from the related calculations in \cite{[McS]}.

\end{proof}

We are now ready to give a proof for Theorem \ref{theorem3}. Let $Y=\Ext^1(\Z(3), \Z(12))=\Ext^1(\Z, \Z(9))$. By the previous section we know that as a group 
\[ Y= \Z b_{9} \oplus \Z/ 2 .\]  
Here $b_{9}$ is the zeta element described in lemma $\ref{lem:Soule}$. By definition of $X$, we have a canonical projection map 
\[p: X \to Y .\] 
Therefore, as a set 
\[ X=  \bigsqcup_{a \in Y} X_{a} ,\]
where $X_{a}$ is the fiber of $p$ over a point $a$. In the following, our convention is as follows: for each notation appeared in the previous section we add a letter $a$ to indicate that the corresponding object depends on $a$. With this convention we have 
\[ \# \{x \in X| H_{\star, \diamond}(x) \leq B \} =\sum_{a \in Y} \# \{x \in X_{a}| H_{\star, \diamond} (x) \leq B \} .\] 
Suppose that $a$ correspond to $ub_{9}$ then the order of $\{ x \in X_a| H_{\star, \diamond}(x)<B \}$ is given by 
\[|T_a| \# \{m \in \Z| \exp(|r_{\infty}(ub_{9})|^{1/9}+(|m |\delta_{a}| r_{\infty}(b_{3})|)^{1/3}) \leq B \} .\]
We know that for all $a$ 
\[ \# T_{a}=\prod_{p} \# H^1(\Z[1/p], \Z_p(12))=\dfrac{\Sha(12)}{\zeta(-11)} .\] 
Therefore up to some power of $2$ we have
\[ \# \{x \in X| H_{\star, \diamond}(x) \leq B \} =\dfrac{\Sha(12)}{\zeta(-11)} \# \{(m,u) \in \N \times \N \}, \] 
where $(m,n)$ satisfies the following condition 
\[ \exp(|ur_{\infty}(b_{9})|^{1/9}+(|m |\delta_{a}| r_{\infty}(b_{3})|)^{1/3}) \leq B .\] 
In this particular case, $\delta_{a,p}$ can only be nontrivial when $p=691$ by Lemma $\ref{cup}$. For simplicity, let us define 
\begin{align*}
S(B) &= \{(m,u) \in \N \times \N | \exp(|ur_{\infty}(b_{9})|^{1/9}+(|m |\delta_{a}| r_{\infty}(b_{3})|)^{1/3}) \leq B \} \\
     &=\{(m,u) \in \N \times \N| |ur_{\infty}(b_{9})|^{1/9}+(|m |\delta_{a}| r_{\infty}(b_{3})|)^{1/3}) \leq \log(B) \}
\end{align*}
As $\delta_{a}$ depends on $a$ we consider two cases.

\textbf{Case 1: $691|u$.} Then, $\delta_{a,p}=0$ for all $p$. In this case 
\[ X_{a}=\Z b_{3} \oplus T_{a} .\]  

\textbf{Case 2: $691 \nmid u$.} Then $\delta_{a,p} \neq 0$ when $p=691$ by Lemma $\ref{cup}$. In this case $X_{a}$ is a little smaller then the previous case; i.e 
\[ X_{a}=691 \Z b_{3} \oplus T_{a} .\] 
Combining these two cases we have 
\[ |S(B)|=|S_1(B)|+|S_2(B)|-|S_1(B) \cap S_2(B)| ,\] 
where 
\[S_1=\{(m,u) \in \N \times \N| |ur_{\infty}(b_{9})|^{1/9}+(|m |\delta_{a}| r_{\infty}(b_{3})|)^{1/3}) \leq \log(B); 691|u \} \]
and 
\[S_2=\{(m,u) \in \N \times \N| |ur_{\infty}(b_{9})|^{1/9}+(|m |\delta_{a}| r_{\infty}(b_{3})|)^{1/3}) \leq \log(B); 691|m \}. \]
By letting $u=691u'$ in the first case we have 
\[S_1=\{(m,u') \in \N \times \N| |691u' r_{\infty}(b_{9})|^{1/9}+(|m |\delta_{a}| r_{\infty}(b_{3})|)^{1/3}) \leq \log(B) \}. \]
Therefore, by the counting lemma $\ref{lem: integer}$ we have 
\[ |S_1(B)| \sim \dfrac{1}{691 {12 \choose 3}} \dfrac{1}{r_{\infty}(b_9) r_{\infty}(b_3)} \log(B)^{12} .\] 
Similarly, we have 
\[ |S_2(B)| \sim  \dfrac{1}{691 {12 \choose 3}} \dfrac{1}{r_{\infty}(b_9) r_{\infty}(b_3)} \log(B)^{12} ,\]
and 
\[ |S_1(B) \cap S_2(B)|=\dfrac{1}{691^2 {12 \choose 3}} \dfrac{1}{r_{\infty}(b_9) r_{\infty}(b_3)} \log(B)^{12} .\] 
Consequently, we have
\begin{align*} 
|S(B)| & \sim \dfrac{1}{{12 \choose 3}} \dfrac{1}{r_{\infty}(b_9) r_{\infty}(b_3)} \left(\dfrac{2}{691}-\dfrac{1}{691^2} \right) \log(B)^{12} \\
       & \sim \dfrac{1}{8! 2! {12 \choose 3}} \dfrac{\Sha(3)}{\zeta(3)} \dfrac{\Sha(9)}{\zeta(9)} \left(\dfrac{2}{691}-\dfrac{1}{691^2} \right) \log(B)^{12}. 
\end{align*}
In conclusion 
\[ \# \{x \in X| H_{\star, \diamond} (x) \leq B \} \sim \dfrac{1}{ 8! 2! {12 \choose 3}} \dfrac{\Sha(3)}{\zeta(3)}  \dfrac{\Sha(9)}{\zeta(9)} \dfrac{\Sha(12)}{\zeta(-11)} \left(\dfrac{2}{691}-\dfrac{1}{691^2} \right) \log(B)^{12} .\]

\section*{Acknowledgements}

I am extremely grateful to Professor Kazuya Kato for introducing to me this topic of study, for sharing his insights, and for his constant support during the elaboration of this work. I am also thankful to the editor and the referee for some comments and suggestions which greatly improve the presentation of this article.

% Figures and tables, if you decide to leave them to the end
%\input{figure}
%\input{table}

\end{document}